\title{M\"{o}bius Functions of Some Annular Noncrossing Objects}
\author{C.\ E.\ I.\ Redelmeier}

\documentclass[11pt]{article}

\usepackage{graphicx}
\usepackage{graphics}
\usepackage{amsfonts}
\usepackage{amssymb}
\usepackage{amsthm}
\usepackage{amsmath}
\usepackage{color}
\usepackage{cite}
\usepackage{enumitem}

\hyphenation{Wis-hart Hil-bert non-stand-ard}

\newtheorem{theorem}{Theorem}[section]
\newtheorem{lemma}[theorem]{Lemma}
\newtheorem{proposition}[theorem]{Proposition}

\theoremstyle{remark}
\newtheorem{remark}[theorem]{Remark}

\newtheorem{example}[theorem]{Example}

\theoremstyle{definition}
\newtheorem{definition}[theorem]{Definition}

\newtheoremstyle{case}{}{}{}{}{}{:}{ }{}
\theoremstyle{case}
\newtheorem{case}{Case}

\begin{document}

\maketitle

\begin{abstract}
We present the M\"{o}bius functions of several posets of annular noncrossing objects, namely a self-dual extension of the annular noncrossing permutations, minimal length annular partitioned permutations, and annular noncrossing partitons.
\end{abstract}

\section{Introduction}

Annular noncrossing objects are a natural generalization of noncrossing objects \cite{MR142470}, with applications in random matrix theory \cite{MR2052516, MR1492512}, second-order freeness \cite{MR2216446, MR2294222, MR2302524, MR3217665, redelmeier2015quaternionic}, and statistical physics \cite{MR1722987}.  There are a number of natural definitions of annular noncrossing objects leading to distinct sets of objects (unlike in the usual case, where the set of objects is the same regardless of whether they are defined on the disc or on the line, or whether the object is represented as a partition or a permutation), such as whether the cyclic ordering of elements in a block or the winding number around the annulus distinguishes objects.  In this paper, we consider a number of distinct sets of annular noncrossing objects.

Like the usual noncrossing objects, the various annular noncrossing objects may be arranged into partially ordered sets.  A natural construction on a poset is the M\"{o}bius function of the poset.  As with the usual noncrossing objects \cite{MR2266879}, the M\"{o}bius function is useful in the applications \cite{MR2302524}.

In this paper, we present the M\"{o}bius functions of the posets of several annular noncrossing objects.  The preliminaries, including notation and basic definitions, are given in Section~\ref{section: preliminaries}; however, many defintions and lemmas are included in most relevant sections.  The background on annular noncrossing permutations, including the M\"{o}bius function, is given in Section~\ref{section: permutations}.  In Section~\ref{section: self dual}, we give the M\"{o}bius function of a self-dual extension of the annular noncrossing permutations which has a smallest and largest element.  The values of this M\"{o}bius function appear in the second-order free moment-cumulant formula \cite{MR2302524}.  In Section~\ref{section: partitioned permutations} we present the M\"{o}bius function of the partitioned permutations which correspond to the highest-order terms of fluctuation of large random matrices \cite{MR2294222, MR2302524}.  In Section~\ref{section: partitions} we present the M\"{o}bius function of the annular noncrossing partitions \cite{MR1722987}, which record the elements which appear in the same block, but not their cyclic order.

\section{Preliminaries}

\label{section: preliminaries}

For integers \(m,n\) with \(0<m\leq n\), we denote \(\left\{1,\ldots,n\right\}\) by \(\left[n\right]\) and \(\left\{m,\ldots,n\right\}\) by \(\left[m,n\right]\).

We denote the number of elements in a finite set \(I\) by \(\left|I\right|\).

We denote the set of permutations on a set \(I\) by \(S\left(I\right)\), and \(S\left(\left[n\right]\right)\) by \(S_{n}\).  We will typically use cycle notation, and will often omit trivial cycles.  We will apply permutations right-to-left.  We denote the number of cycles in \(\pi\) by \(\#\left(\pi\right)\).

For integers \(n_{1},\ldots,n_{r}\), we denote
\[\tau_{n_{1},\ldots,n_{r}}:=\left(1,\ldots,n_{1}\right)\cdots\left(n_{1}+\cdots+n_{r-1},\ldots,n_{1}+\cdots+n_{r}\right)\]
and in particular, we will often denote
\[\tau:=\tau_{p,q}=\left(1,\ldots,p\right)\left(p+1,\ldots,p+q\right)\textrm{.}\]

\begin{definition}
A {\em partition} of a set \(I\) is a set \(\left\{U_{1},\ldots,U_{n}\right\}\) of nonempty subsets (or {\em blocks}) of \(I\) such that \(U_{i}\cap U_{j}=\emptyset\) for \(i\neq j\) and \(U_{1}\cup\cdots\cup U_{n}=S\).

The set of all partitions of a finite set \(I\) is denoted by \({\cal P}\left(I\right)\) (and \({\cal P}\left(\left[n\right]\right)\) is denoted \({\cal P}\left(n\right)\)).

For partitions \({\cal U},{\cal V}\in{\cal P}\left(I\right)\), we say that \({\cal U}\preceq{\cal V}\) if every block of \({\cal U}\) is contained in a block of \({\cal V}\).

We define the function \(\Pi:S\left(I\right)\rightarrow{\cal P}\left(I\right)\) by letting \(\Pi\left(\pi\right)\) be the set of orbits of \(\pi\).  Where there is no ambiguity, we will often use a permutation \(\pi\) to represent the partition \(\Pi\left(\pi\right)\).
\end{definition}

\begin{definition}
For a permutation \(\pi\in S\left(I\right)\), the permutation induced by \(\pi\) on \(J\subseteq I\), denoted \(\left.\pi\right|_{J}\), is the permutation such that
\[\left.\pi\right|_{J}\left(a\right)=\pi^{k}\left(a\right)\]
where \(k>0\) is the smallest integer such that \(\pi^{k}\left(a\right)\in J\).  This permutation may be cosntructed from \(\pi\) in cycle notation by deleting elements not in \(J\).

If \({\cal U}=\left\{U_{1},\ldots,U_{k}\right\}\in{\cal P}\left(I\right)\), then we define
\[\left.\pi\right|_{{\cal U}}:=\left(\left.\pi\right|_{U_{1}}\right)\cdots\left(\left.\pi\right|_{U_{k}}\right)\textrm{.}\]

\end{definition}

\subsection{Posets and the M\"{o}bius function}

We will use interval notation in posets: \(\left[x,y\right]=\left\{z:x\preceq z\preceq y\right\}\), \(\left(x,y\right)=\left\{z:x\prec z\prec y\right\}\), etc.

We will use the following standard results on posets and M\"{o}bius functions.  Proofs and additional information can be found in various sources, including \cite{MR1311922, MR2266879}
\begin{lemma}
If the intervals of posets \(\left[x_{1},y_{1}\right]\in P_{1}\) and \(\left[x_{2},y_{2}\right]\in P_{2}\) are isomorphic (i.e.\ there is a bijection \(f:\left[x_{1},y_{1}\right]\rightarrow\left[x_{2},x_{2}\right]\) such that \(z_{1}\preceq z_{2}\Leftrightarrow f\left(z_{1}\right)\preceq f\left(z_{2}\right)\)), and if \(\mu_{1}\) is the M\"{o}bius function on \(P_{1}\) and \(\mu_{2}\) is the M\"{o}bius function on \(P_{2}\), then \(\mu_{1}\left(x_{1},y_{1}\right)=\mu_{2}\left(x_{2},y_{2}\right)\).

If the intervals of posets \(\left[x_{1},y_{1}\right]\in P_{1}\) and \(\left[x_{2},y_{2}\right]\in P_{2}\) are dual (i.e.\ there is a bijection \(f:\left[x_{1},y_{1}\right]\rightarrow\left[x_{2},x_{2}\right]\) such that \(z_{1}\preceq z_{2}\Leftrightarrow f\left(z_{1}\right)\succeq f\left(z_{2}\right)\)), then if \(\mu_{1}\) is the M\"{o}bius function on \(P_{1}\) and \(\mu_{2}\) is the M\"{o}bius function on \(P_{2}\), then again \(\mu_{1}\left(x_{1},y_{1}\right)=\mu_{2}\left(x_{2},y_{2}\right)\).

Let \(P_{1},\ldots,P_{n}\) are posets, and let the M\"{o}bius function of \(P_{k}\) be \(\mu_{k}\) and \(x_{k},y_{k}\in P_{k}\) for \(k=1,\ldots,n\).  Let \(\mu\) be the M\"{o}bius function of the product poset \(P_{1}\times\cdot\times P_{n}\).  Then
\[\mu\left(\left(x_{1},\ldots,x_{n}\right),\left(y_{1},\ldots,y_{n}\right)\right)=\mu_{1}\left(x_{1},y_{2}\right)\cdots\mu_{n}\left(x_{n},y_{n}\right)\textrm{.}\]
\end{lemma}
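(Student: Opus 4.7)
The plan is to prove all three statements directly from the recursive definition of the Möbius function, namely \(\mu(x,x)=1\) and \(\mu(x,y)=-\sum_{x\preceq z\prec y}\mu(x,z)\) for \(x\prec y\), together with the equivalent summation characterization \(\sum_{x\preceq z\preceq y}\mu(x,z)=\delta_{x,y}\). Each of the three assertions is a standard consequence, and the arguments can be organized by induction on the length of the longest chain in the interval.

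For the isomorphism claim, I would induct on the length of \([x_{1},y_{1}]\). The base case \(x_{1}=y_{1}\) is immediate because an order isomorphism forces \(f(x_{1})=x_{2}=y_{2}\), so both Möbius values equal \(1\). For the inductive step, \(f\) restricts to an order isomorphism of \([x_{1},z]\) with \([x_{2},f(z)]\) for each \(z\in[x_{1},y_{1})\), and the inductive hypothesis gives \(\mu_{1}(x_{1},z)=\mu_{2}(x_{2},f(z))\). Since \(f\) is a bijection of \([x_{1},y_{1})\) onto \([x_{2},y_{2})\), summing and applying the defining recursion on each side yields \(\mu_{1}(x_{1},y_{1})=\mu_{2}(x_{2},y_{2})\).

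For the duality claim, I would first note the dual recursion \(\mu(x,y)=-\sum_{x\prec z\preceq y}\mu(z,y)\), which is an easy consequence of the summation identity above. An order-reversing bijection \(f:[x_{1},y_{1}]\rightarrow[x_{2},y_{2}]\) must send \(x_{1}\) to \(y_{2}\) and \(y_{1}\) to \(x_{2}\), and it carries each subinterval \([x_{1},z]\) bijectively to \([f(z),y_{2}]\). Inducting again on chain length, the forward recursion for \(\mu_{1}(x_{1},y_{1})\) translates via \(f\) into the dual recursion for \(\mu_{2}(x_{2},y_{2})\), and the two values agree.

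For the product formula, rather than iterating it is cleaner to verify that the candidate value \(\mu_{1}(x_{1},y_{1})\cdots\mu_{n}(x_{n},y_{n})\) satisfies the summation identity that characterizes the Möbius function on the product. Because \((z_{1},\ldots,z_{n})\preceq(y_{1},\ldots,y_{n})\) iff \(z_{k}\preceq y_{k}\) for every \(k\), and by distributing a product of sums,
\[\sum_{(x_{1},\ldots,x_{n})\preceq(z_{1},\ldots,z_{n})\preceq(y_{1},\ldots,y_{n})}\prod_{k=1}^{n}\mu_{k}(x_{k},z_{k})=\prod_{k=1}^{n}\sum_{x_{k}\preceq z_{k}\preceq y_{k}}\mu_{k}(x_{k},z_{k})=\prod_{k=1}^{n}\delta_{x_{k},y_{k}}\text{,}\]
which equals \(\delta_{(x_{1},\ldots,x_{n}),(y_{1},\ldots,y_{n})}\). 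Uniqueness of the Möbius function then gives the claimed formula. The only mild obstacle is bookkeeping in the duality case, where one must be careful that \(f\) exchanges the roles of minimum and maximum so that the forward recursion on one side corresponds to the dual recursion on the other; all remaining steps are routine inductions on chain length.
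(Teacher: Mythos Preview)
Your proof is correct and essentially the textbook argument for these three standard facts. The paper, however, does not give its own proof of this lemma at all: it states the result and defers to standard references (Stanley's \emph{Enumerative Combinatorics} and Nica--Speicher), writing that ``proofs and additional information can be found in various sources.'' So there is nothing to compare in terms of approach---you have supplied a proof where the paper simply cites one.

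A couple of minor remarks on your write-up. In the isomorphism and duality arguments you should explicitly note that an order isomorphism (respectively, anti-isomorphism) must send the minimum \(x_{1}\) to the minimum \(x_{2}\) (respectively, to the maximum \(y_{2}\)), since this is what guarantees that \(f\) restricts appropriately to subintervals; you use this but it could be stated. In the duality case, the dual recursion \(\mu(x,y)=-\sum_{x\prec z\preceq y}\mu(z,y)\) is itself worth a one-line justification (it follows from the symmetric characterization \(\sum_{x\preceq z\preceq y}\mu(z,y)=\delta_{x,y}\), which can be obtained from the zeta/M\"{o}bius inverse pair in the incidence algebra). Otherwise the argument is clean and complete.
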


\subsection{Catalan numbers and a second-order generalization}

\begin{definition}
The \(n\)th Catalan number is
\[C_{n}:=\frac{1}{n+1}\binom{2n}{n}\textrm{.}\]
The generating function for the Catalan numbers is
\[C\left(x\right):=\sum_{n=0}^{\infty}C_{n}x^{n}=\frac{1-\sqrt{1-4x}}{2x}\textrm{.}\]
\end{definition}

A relevant second-order generalization of the Catalan numbers (from \cite{MR1761777, MR1959915}) is:
\[\gamma_{p,q}=\frac{2}{p+q}\frac{\left(2p-1\right)!}{\left(p-1\right)!^{2}}\frac{\left(2q-1\right)!}{\left(q-1\right)!^{2}}\textrm{.}\]

\subsection{Cartographic machinery}

The geometric motivation for the following constructions is described in such sources as \cite{MR0404045, MR1603700, MR2036721}.

\begin{definition}
The {\em Kreweras complement} of a permutation \(\pi\) (with respect to a permutation \(\tau\)) is defined as
\[\mathrm{Kr}_{\tau}\left(\pi\right):=\pi^{-1}\tau\textrm{.}\]
The inverse of this function is
\[\mathrm{Kr}_{\tau}^{-1}\left(\pi\right)=\tau\pi^{-1}\textrm{.}\]
If \(\tau\) is understood (typically \(\tau_{p,q}\)), it may be omitted from the notation.
\end{definition}

\begin{definition}
For \(\pi,\rho\in S\left(I\right)\), we say that \(\rho\) is noncrossing on \(\pi\) if
\[\#\left(\pi\right)+\#\left(\rho\right)+\#\left(\mathrm{Kr}_{\pi}\left(\rho\right)\right)=\left|I\right|+2\#\left(\langle\pi,\rho\rangle\right)\]
where \(\#\left(\langle\pi,\rho\rangle\right)\) is the number of orbits in \(I\) of the subgroup generated by \(\pi\) and \(\rho\).
The set of \(\rho\) which are noncrossing on \(\pi\) is denoted \(S_{\mathrm{nc}}\left(\pi\right)\).

If, in addition, \(\Pi\left(\rho\right)\preceq\Pi\left(\pi\right)\), we say that \(\rho\) is disc-noncrossing on \(\pi\), and that \(\rho\preceq\pi\).  The set of \(\rho\) which are disc-noncrossing on \(\pi\) is denoted \(S_{\mathrm{disc-nc}}\left(\pi\right)\).
\end{definition}

For any permutation \(\pi\), the set \(S_{\mathrm{disc-nc}}\left(\pi\right)\) is a lattice, i.e.\ any set of elements has a greatest lower bound and a least upper bound (see, e.g., \cite{MR2266879}).

For \(\rho,\sigma\in S_{\mathrm{disc-nc}}\left(\pi\right)\) with \(\rho\preceq\sigma\), the M\"{o}bius function (which we will denote by \(\mu\)) is
\begin{equation}
\mu\left(\rho,\sigma\right)=\prod_{U\in\Pi\left(\mathrm{Kr}_{\sigma}\left(\rho\right)\right)}\left(-1\right)^{\left|U\right|-1}C_{\left|U\right|-1}\textrm{.}
\label{equation: disc noncrossing}
\end{equation}
The proof is as in \cite{MR2266879}, Lectures~9 and 10.

If \(\tau\) has \(1\) cycle, \(\pi\) being noncrossing is equivalent to the following:
\begin{lemma}[Biane]
Let \(\tau\in S\left(I\right)\) have one cycle.  Then \(\pi\in S_{\mathrm{nc}}\left(\tau\right)\) iff neither of the following conditions occurs:
\begin{enumerate}
	\item There are \(a,b,c\in I\) such that \(\left.\tau\right|_{\left\{a,b,c\right\}}=\left(a,b,c\right)\) and \(\left.\pi\right|_{\left\{a,b,c\right\}}=\left(a,c,b\right)\).
	\item There are \(a,b,c,d\in I\) such that \(\left.\tau\right|_{\left\{a,b,c,d\right\}}=\left(a,b,c,d\right)\) and \(\left.\pi\right|_{\left\{a,b,c,d\right\}}=\left(a,c\right)\left(b,d\right)\).
\end{enumerate}
\label{lemma: disc noncrossing}
\end{lemma}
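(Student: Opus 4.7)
The plan is to reduce the noncrossing condition to a genus-zero equation, prove the forward implication by showing noncrossingness descends to subsets and directly computing the two small forbidden patterns, and prove the backward implication by induction on $|I|$ via a reduction of an innermost cycle.

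Since $\tau$ has a single cycle, the subgroup $\langle\tau,\pi\rangle$ already acts transitively on $I$ through $\tau$, so $\#(\langle\tau,\pi\rangle)=1$. The definition of $\pi\in S_{\mathrm{nc}}(\tau)$ therefore collapses to the genus-zero identity
\[\#(\pi)+\#(\pi^{-1}\tau)=|I|+1.\]

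For the direction $\pi\in S_{\mathrm{nc}}(\tau)\Rightarrow$ neither configuration occurs, I would first establish a restriction lemma: if $\tau$ has one cycle and $\pi\in S_{\mathrm{nc}}(\tau)$, then for every $J\subseteq I$ one has $\pi|_{J}\in S_{\mathrm{nc}}(\tau|_{J})$. This can be proved by iterated single-element deletion, tracking how the cycle counts of $\pi$ and $\pi^{-1}\tau$ change and using the genus interpretation (equivalently, by the geometric picture in which the cycles of $\pi$ are drawn as noncrossing polygons inside the $\tau$-disc and restriction merely forgets vertices). Granted this, I compute the genus of each forbidden pattern explicitly. In case~(1), $\pi|_{J}=(a,c,b)$ and $(\pi|_{J})^{-1}\tau|_{J}=(a,b,c)(a,b,c)=(a,c,b)$, giving $1+1=|J|-1$; in case~(2), $\pi|_{J}=(a,c)(b,d)$ and $(\pi|_{J})^{-1}\tau|_{J}=(a,d,c,b)$, giving $2+1=|J|-1$. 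Both fall short of $|J|+1$, contradicting the restriction lemma.

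For the converse, I would induct on $|I|$, with $|I|\leq 4$ dispatched by enumeration. Assuming $\pi$ avoids both patterns, I locate a $\pi$-cycle $C$ whose support forms an interval in the $\tau$-cyclic order: pick a cycle of $\pi$ whose $\tau$-span is shortest. Avoidance of case~(2) then forces no element of another $\pi$-cycle to fall strictly between $\tau$-consecutive members of $C$, so $C$ really is an interval; avoidance of case~(1) forces $\pi$ to cycle $C$ in the same cyclic order as $\tau$. Contracting the $\tau$-arc supporting $C$ to a single point yields $(\pi',\tau')$ on $|I|-|C|+1$ letters, which still avoids both patterns. By induction $\#(\pi')+\#((\pi')^{-1}\tau')=|I|-|C|+2$, and a routine check shows the contraction contributed exactly $|C|-1$ to each side of the genus equation, giving the desired identity for $\pi$.

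The main obstacle is the first step of the backward induction: producing the interval-supported cycle $C$. Merely choosing a cycle of minimal $\tau$-span is not obviously sufficient, and the bulk of the combinatorial work will be in using both forbidden-configuration hypotheses in a coordinated way — case~(2) to prevent interleaving with outside elements, case~(1) to pin down the internal cyclic order — and then verifying that the contracted instance inherits the avoidance hypotheses cleanly, so that the induction may close.
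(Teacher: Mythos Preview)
The paper does not prove this lemma at all: it simply attributes the result to Biane and cites \cite{MR1475837}. So there is nothing to compare your argument against; you are supplying a proof where the paper defers to the literature.

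Your outline is sound. The forward direction is exactly right: noncrossingness is hereditary under restriction when $\tau$ has a single cycle, and your two genus computations for the forbidden patterns are correct.

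For the backward direction, your worry about the minimal-span cycle is unwarranted --- the argument goes through cleanly. Define the span of a $\pi$-cycle $C$ as the length of the shortest $\tau$-arc $A$ containing it, and pick $C$ of minimal span. If some $x\in A\setminus C$ lies in another $\pi$-cycle $D$, then either $D\subseteq A$, in which case $D$ misses the endpoints of $A$ (which lie in $C$) and hence has strictly smaller span, contradicting minimality; or $D$ has an element $y\notin A$, in which case the endpoints $a,b$ of $A$ together with $x,y$ realise pattern~(2). So $C$ is a $\tau$-interval. Pattern~(1) then forces $\pi$ to traverse $C$ in $\tau$-order. The one case not covered by this is when $\pi$ has a single cycle on all of $I$; there pattern~(1) alone forces $\pi=\tau$, which is noncrossing. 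Your contraction bookkeeping is routine: deleting $C$ drops $\#(\pi)$ by one and $\#(\pi^{-1}\tau)$ by $|C|-1$ (the latter because $\pi^{-1}\tau$ fixes all but one element of $C$), matching the drop of $|C|$ in $|I|$. Inheritance of the avoidance hypotheses is immediate since restriction is transitive.
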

See \cite{MR1475837}.

\section{Noncrossing Permutations on the Annulus}

\label{section: permutations}

In this section we collect the relevant definitions and results on annular noncrossing permutations that will be used througout the paper, and record the M\"{o}bius function for the noncrossing permutations on the annulus.

\begin{definition}
The set of permutations which are noncrossing on \(\tau_{p,q}\) is denoted \(S_{\mathrm{nc}}\left(p,q\right)\).  The set of permutations \(\pi\in S_{\mathrm{nc}}\left(p,q\right)\) with \(\Pi\left(\pi\right)\npreceq\Pi\left(\tau\right)\) (i.e.\ the permutations that connect the two discs of the annulus) is denoted \(S_{\mathrm{ann-nc}}\left(p,q\right)\).

A cycle of \(\pi\in S_{\mathrm{nc}}\left(p,q\right)\) which contains elements from both cycles of \(\tau_{p,q}\) will be called a {\em bridge}.

We will use the partial order defined in Section~\ref{section: preliminaries}.  We may denote the smallest element of this poset \(\left(1\right)\cdots\left(p+q\right)\) by \(0\).  This poset does not have a largest element.

In this paper, we denote the M\"{o}bius function of this poset on \(S_{\mathrm{nc}}\left(p,q\right)\) simply by \(\mu\), since it is consistent with all the other M\"{o}bius functions discussed in this paper.
\end{definition}

The following result from \cite{MR2052516} gives conditions on a permutation \(\pi\in S_{p+q}\) equivalent to it being noncrossing on \(\tau_{p,q}\).

\begin{lemma}[Mingo, Nica]
Let \(\tau:=\tau_{p,q}\) and let \(\pi\in S_{p+q}\)

The annular-nonstandard conditions are:
\begin{enumerate}
	\item There are \(a,b,c\in\left[p+q\right]\) with \(\left.\tau\right|_{\left\{a,b,c\right\}}\left(a,b,c\right)\) and \(\left.\pi\right|_{\left\{a,b,c\right\}}\left(a,c,b\right)\).
\label{item: ans1}
	\item There are \(a,b,c,d\in\left[p+q\right]\) with \(\left.\tau\right|_{\left\{a,b,c,d\right\}}\left(a,b\right)\left(c,d\right)\) and \(\left.\pi\right|_{\left\{a,b,c,d\right\}}\left(a,c,b,d\right)\).
\label{item: ans2}
\end{enumerate}

For \(x,y\) in different cycles of \(\tau\), we let
\[\lambda_{x,y}:=\left(\tau\left(x\right),\tau^{2}\left(x\right),\ldots,\tau^{-1}\left(x\right),\tau\left(y\right),\tau^{2}\left(y\right),\ldots,\tau^{-1}\left(y\right)\right)\textrm{.}\]
The annular-crossing conditions are:
\begin{enumerate}
	\item There are \(a,b,c,d\in\left[p+q\right]\) with \(\left.\tau\right|_{\left\{a,b,c,d\right\}}\left(a,b,c,d\right)\) and \(\left.\pi\right|_{\left\{a,b,c,d\right\}}=\left(a,c\right)\left(b,d\right)\).
\label{item: ac1}
	\item There are \(a,b,c,x,y\in\left[p+q\right]\) such that \(\left.\lambda_{x,y}\right|_{\left\{a,b,c\right\}}=\left(a,b,c\right)\) and \(\left.\pi\right|_{\left\{a,b,c,x,y\right\}}=\left(a,c,b\right)\left(x,y\right)\).
\label{item: ac2}
	\item There are \(a,b,c,d,x,y\in\left[p+q\right]\) such that and \(\left.\lambda_{x,y}\right|_{\left\{a,b,c,d\right\}}=\left(a,b,c,d\right)\) and \(\left.\pi\right|_{\left\{a,b,c,d,x,y\right\}}=\left(a,c\right)\left(b,d\right)\left(x,y\right)\).
\label{item: ac3}
\end{enumerate}

Iff none of the above conditions occurs, then \(\pi\in S_{\mathrm{nc}}\left(p,q\right)\).
\end{lemma}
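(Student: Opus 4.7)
The plan is to prove both directions of the equivalence. For the forward direction, I would verify that each of the five forbidden configurations forces the genus defect
\[\delta(\pi) := (p+q) + 2\#(\langle\tau,\pi\rangle) - \#(\tau) - \#(\pi) - \#(\mathrm{Kr}_{\tau}(\pi))\]
to be strictly positive, so that $\pi\notin S_{\mathrm{nc}}(p,q)$. Conditions (\ref{item: ans1}) and (\ref{item: ac1}) sit inside a single cycle of $\tau$, so Biane's disc lemma applied to the restriction to that cycle exhibits a disc obstruction there, and a direct count shows it contributes to $\delta(\pi)$. Condition (\ref{item: ans2}) is verified by a direct four-point computation. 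For (\ref{item: ac2}) and (\ref{item: ac3}), the transposition $(x,y)$ is a bridge (since $x$ and $y$ lie in different cycles of $\tau$), which together with a Biane-type $3$- or $4$-element crossing in the order given by $\lambda_{x,y}$ yields $\delta(\pi)\geq 1$ by direct enumeration of cycles.

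For the reverse direction, I would split on whether $\pi$ has a bridge. If $\pi$ has no bridges, then $\#(\langle\tau,\pi\rangle) = 2$ and the noncrossing condition decouples into disc noncrossing conditions for $\pi|_{[p]}$ on $(1,\ldots,p)$ and for $\pi|_{[p+1,p+q]}$ on $(p+1,\ldots,p+q)$. Biane's lemma on each cycle then yields exactly the absence of patterns (\ref{item: ans1}) and (\ref{item: ac1}) within that cycle, while (\ref{item: ans2}), (\ref{item: ac2}), (\ref{item: ac3}) cannot occur in the bridge-free case because each requires a $\pi$-cycle straddling the two cycles of $\tau$.

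If $\pi$ has a bridge, pick $x\in[p]$ and $y\in[p+1,p+q]$ in the same cycle of $\pi$, and form $\lambda := \lambda_{x,y}$, which is a single $(p+q)$-cycle. The central step is a bridge reduction: geometrically, cutting the annulus along the bridge unfolds it into a disc with cyclic order $\lambda$, and algebraically, $\pi$ is annular noncrossing on $\tau$ iff a suitably modified permutation $\pi'$ (obtained by removing the edge $x \to y$ from the $\pi$-cycle containing them, or equivalently by composing with the transposition $(x,y)$) is disc noncrossing on $\lambda$. Applying Biane's single-cycle lemma to $\pi'$ on $\lambda$ then translates the absence of its two disc patterns into the absence of (\ref{item: ac1}), (\ref{item: ac2}), (\ref{item: ac3}) for the original $\pi$ on $\tau$; any disc pattern internal to a $\tau$-cycle and not involving the bridge remains detected by (\ref{item: ans1}) or (\ref{item: ans2}).

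The main obstacle is making this bridge reduction rigorous. Producing a clean bijection between $\pi$-configurations on $\tau_{p,q}$ and modified configurations on $\lambda_{x,y}$, and tracking case-by-case how Biane's two forbidden disc patterns on $\lambda$ (the $3$-cycle and $4$-cycle crossings) map to the five annular obstructions, is where all the genuine combinatorial content lies. I expect a Biane $4$-cycle pattern on $\lambda$ lying entirely on one side of the cut to produce (\ref{item: ac1}), one straddling the cut to produce (\ref{item: ac3}), and a $3$-cycle pattern straddling the cut to produce (\ref{item: ac2}); verifying this correspondence and checking that no other cases arise will be the bulk of the work.
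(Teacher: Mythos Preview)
The paper does not prove this lemma at all: it is attributed to Mingo and Nica, introduced with ``The following result from \cite{MR2052516}\ldots'', and stated without proof. So there is no proof in the paper to compare your proposal against; your plan is a proposed proof of a result the paper simply imports.

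That said, your outline is the natural one and is close in spirit to the argument in the original source. The forward direction (each of the five patterns forces positive genus defect) is routine. The bridge-free branch of the reverse direction is also fine, and your observation that conditions~\ref{item: ans2}, \ref{item: ac2}, \ref{item: ac3} each force a bridge is correct. The bridge reduction is indeed where the work is, and you have correctly flagged it. Two points to watch when you carry it out: first, the map from Biane obstructions for $\pi'$ on $\lambda_{x,y}$ back to the five annular conditions is not a tidy bijection---a Biane $3$-pattern on $\lambda$ lying entirely within one arc of $\lambda$ (i.e.\ within one $\tau$-cycle) gives back~\ref{item: ans1}, not~\ref{item: ac2}, and a pattern straddling the cut can yield~\ref{item: ans2} rather than~\ref{item: ac2} or~\ref{item: ac3} depending on whether the offending $\pi$-cycle is itself the bridge containing $x,y$; your case analysis must track the position of $a,b,c,(d)$ relative to the two arcs \emph{and} whether they lie in the bridge cycle. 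Second, the modification $\pi\mapsto\pi'$ by the transposition $(x,y)$ changes cycle counts by exactly one, and you need this to match the change $\#(\tau)\to\#(\lambda)$ so that $\delta$ is preserved; this is the algebraic content of ``cutting along the bridge'' and should be stated and checked explicitly rather than left to the geometric picture.
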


\begin{proposition}
For \(\pi,\rho\in S_{\mathrm{nc}}\left(p,q\right)\) with \(\pi\preceq\rho\),
\begin{equation}
\mu\left(\pi,\rho\right)=\prod_{U\in\Pi\left(\mathrm{Kr}_{\rho}\left(\pi\right)\right)}\left(-1\right)^{\left|U\right|-1}C_{\left|U\right|-1}\textrm{.}
\label{equation: base}
\end{equation}
\end{proposition}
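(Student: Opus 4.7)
The strategy is to reduce the proposition to the disc-noncrossing M\"{o}bius formula~(\ref{equation: disc noncrossing}) by showing that the interval $[\pi,\rho]$ inside $S_{\mathrm{nc}}(p,q)$ coincides, as a poset, with the interval $[\pi,\rho]$ inside $S_{\mathrm{disc-nc}}(\rho)$. Once that identification is in place, the first clause of the lemma of Section~\ref{section: preliminaries} on isomorphic intervals forces the two M\"{o}bius functions to agree, and (\ref{equation: disc noncrossing}) produces exactly the displayed product over the cycles of $\mathrm{Kr}_{\rho}(\pi)$.

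Both partial orders are given by the same rule, namely $\sigma_{1}\preceq\sigma_{2}$ iff $\sigma_{1}$ is disc-noncrossing on $\sigma_{2}$, so the two intervals automatically carry the same order wherever both are defined. Thus it suffices to prove the two underlying sets coincide. The inclusion of the annular interval inside the disc one is immediate from the definition of $\preceq$. The non-trivial content is the reverse inclusion, which amounts to the transitivity statement: if $\sigma\in S_{\mathrm{disc-nc}}(\rho)$ and $\rho\in S_{\mathrm{nc}}(p,q)$, then $\sigma\in S_{\mathrm{nc}}(p,q)$.

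To prove this transitivity I would work with the genus identity underlying the noncrossing condition: for any pair $\alpha,\beta$ the quantity
\[g(\alpha,\beta):=\tfrac{1}{2}\bigl(|I|+2\#(\langle\alpha,\beta\rangle)-\#(\alpha)-\#(\beta)-\#(\mathrm{Kr}_{\alpha}(\beta))\bigr)\]
is a non-negative integer, and noncrossing is precisely its vanishing. Starting from the factorization $\mathrm{Kr}_{\tau}(\sigma)=\mathrm{Kr}_{\rho}(\sigma)\cdot\mathrm{Kr}_{\tau}(\rho)$, together with the disc-noncrossing hypothesis $\Pi(\sigma)\preceq\Pi(\rho)$ (which forces $\#(\langle\sigma,\rho\rangle)=\#(\rho)$ and lets one compare $\#(\langle\sigma,\tau\rangle)$ to $\#(\langle\rho,\tau\rangle)$) and the triangle inequality $\#(\alpha\beta)\geq\#(\alpha)+\#(\beta)-|I|$, one derives $g(\sigma,\tau)\leq g(\sigma,\rho)+g(\rho,\tau)$; both summands being zero then forces $g(\sigma,\tau)=0$.

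The main obstacle is this transitivity step. The subtlety, absent in the pure disc case, is that $\#(\langle\sigma,\tau\rangle)$ may differ from $\#(\langle\rho,\tau\rangle)$---it can jump between $2$ and $1$ as $\sigma$ gains or loses a bridge---so the orbit-count bookkeeping must be matched precisely against the slack in the triangle inequality applied to $\mathrm{Kr}_{\rho}(\sigma)\cdot\mathrm{Kr}_{\tau}(\rho)$. Once that accounting is carried out, the appeal to (\ref{equation: disc noncrossing}) is routine and the product over the cycles of $\mathrm{Kr}_{\rho}(\pi)$ is immediate.
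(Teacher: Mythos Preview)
Your plan is the same as the paper's: identify the interval $[\pi,\rho]$ in $S_{\mathrm{nc}}(p,q)$ with the interval $[\pi,\rho]$ in $S_{\mathrm{disc-nc}}(\rho)$ (equivalently, with the product over cycles $V$ of $\rho$ of the disc intervals $[\pi|_V,\rho|_V]$), and then read off~(\ref{equation: disc noncrossing}). The paper simply asserts this identification in one line; you attempt to justify the nontrivial inclusion via the genus inequality, which is more than the paper does.

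However, the genus argument as you outline it does not close. With the triangle inequality and $\#(\langle\sigma,\rho\rangle)=\#(\rho)$ one gets only
\[
g(\sigma,\tau)\;\leq\;g(\sigma,\rho)+g(\rho,\tau)+\bigl(\#(\langle\sigma,\tau\rangle)-\#(\langle\rho,\tau\rangle)\bigr),
\]
and the bracketed term is $+1$, not $0$, exactly in the case you flag (when $\rho$ has a bridge but $\sigma$ does not). So the subadditivity $g(\sigma,\tau)\leq g(\sigma,\rho)+g(\rho,\tau)$ is not what the stated ingredients produce; it is the conclusion that still needs proving, and ``matching the slack'' in the triangle inequality against this $+1$ requires an additional argument you have not supplied.

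A cleaner way to dispose of that one bad case, avoiding any refined cycle-count estimate: if $\sigma$ has no bridge then $\Pi(\sigma)\preceq\Pi(\tau)$, and since each cycle of $\sigma$ sits inside a single cycle of $\rho$ and inside a single cycle of $\tau$, it sits inside a cycle of $\rho_0:=\rho|_{\tau}$. Restricting the disc-noncrossing condition on each cycle of $\rho$ to its intersection with a cycle of $\tau$ shows $\sigma\in S_{\mathrm{disc-nc}}(\rho_0)$; since $\rho_0\in S_{\mathrm{disc-nc}}(\tau)$, ordinary disc transitivity gives $\sigma\in S_{\mathrm{disc-nc}}(\tau)\subseteq S_{\mathrm{nc}}(p,q)$. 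In the remaining cases $\#(\langle\sigma,\tau\rangle)=\#(\langle\rho,\tau\rangle)$ and your inequality already yields $g(\sigma,\tau)\leq 0$. With this patch the argument is complete and equivalent to the paper's.
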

\begin{proof}
The poset \(\left[\pi,\rho\right]\) is the product over blocks \(V\in\Pi\left(\rho\right)\) of the poset of disc-noncrossing permutations in the interval \(\left[\left.\pi\right|_{V},\left.\rho\right|_{V}\right]\), from which (\ref{equation: base}) follows.
\end{proof}

The M\"{o}bius function of a natural poset on a definition of the annular noncrossing configurations which includes the winding number of bridges would have the same values, since the possible noncrossing configurations on each block of \(\rho\) would be the disc-noncrossing configurations on that block.

\section{A self-dual extension of the annular noncrossing permutations}

\label{section: self dual}

In this poset, we extend \(S_{\mathrm{nc}}\left(p,q\right)\) so that it is self-dual and has a largest element.  The values of this M\"{o}bius function appear in the expression for second-order free cumulants \cite{MR2302524}.

\begin{definition}
Let \(p,q>0\) be integers, and let \(S_{\mathrm{nc-sd}}\left(p,q\right)\) be the disjoint union of \(S_{\mathrm{ann-nc}}\left(p,q\right)\) with two copies of \(S_{\mathrm{disc-nc}}\left(p,q\right)\), which in this context we will denote \(S_{\mathrm{disc-nc}}\left(p,q\right)\) and \(\hat{S}_{\mathrm{disc-nc}}\left(p,q\right)\).  If an element of the first copy is denoted \(\pi\), we will denote the corresponding element of the second copy by \(\hat{\pi}\) (while an undecorated \(\pi\in S_{\mathrm{nc-sd}}\left(p,q\right)\) may belong to any of the three parts of the poset).  We define \(\widehat{\mathrm{Kr}}:S_{\mathrm{nc-sd}}\left(p,q\right)\rightarrow S_{\mathrm{nc-sd}}\left(p,q\right)\) to take \(\pi\) to an element whose underlying permutation is \(\mathrm{Kr}\left(\pi\right)\) and taking elements of \(S_{\mathrm{disc-nc}}\left(p,q\right)\) to elements of \(\hat{S}_{\mathrm{disc-nc}}\left(p,q\right)\) and {\em vice versa}.

If \(\pi,\rho\in S_{\mathrm{disc-nc}}\left(p,q\right)\cup S_{\mathrm{ann-nc}}\left(p,q\right)\), then \(\pi\preceq \rho\) if \(\pi\preceq\rho\) in the poset of Section~\ref{section: permutations}.  If \(\pi\in S_{\mathrm{nc-sd}}\left(p,q\right)\) and \(\rho\in S_{\mathrm{disc-nc}}\left(p,q\right)\), then \(\pi\preceq\hat{\rho}\) if \(\mathrm{Kr}\left(\pi\right)\succeq\mathrm{Kr}\left(\rho\right)\) in the poset of Section~\ref{section: permutations}.  (It can be easily verified that this creates a poset from the fact that \(S_{\mathrm{disc-nc}}\left(p,q\right)\cup S_{\mathrm{ann-nc}}\left(p,q\right)\) is poset, and \(\widehat{\mathrm{Kr}}\) and \(\widehat{\mathrm{Kr}}^{-1}\) are order-reversing bijections.)

We denote the smallest element, \(\left(1\right)\cdots\left(p+q\right)\), by \(0\).  We denote the largest element, \(\widehat{\left(1,\ldots,p\right)\left(p+1,\ldots,p+q\right)}\), by \(1\).

We will denote the M\"{o}bius function on \(S_{\mathrm{nc-sd}}\left(p,q\right)\) by \(\mu_{\mathrm{sd}}\).
\end{definition}

\begin{remark}
We note that \(S_{\mathrm{nc-sd}}\left(p,q\right)\) is not a lattice.  In \(S_{\mathrm{nc-sd}}\left(1,2\right)\), for example, both \(\left(1,2\right)\left(3\right)\) and \(\left(1,3\right)\left(2\right)\) are covered by both \(\left(1,2,3\right)\) and \(\left(1,3,2\right)\), so they do not have a unique supremum.
\end{remark}

We collect several technical results in Lemma~\ref{lemma: technical}.  Roughly, these results show how an element of \(S_{\mathrm{ann-nc}}\left(p,q\right)\) can be constructed from an element of \(S_{\mathrm{disc-nc}}\left(p,q\right)\) by choosing the outside faces of the diagrams on the two discs, then connecting cycles to form bridges in a noncrossing way.  (See Example~\ref{example: annuli} for the intuition behind the results.)

\begin{lemma}
Let \(\pi\in S_{\mathrm{ann-nc}}\left(p,q\right)\).  Here \(\tau=\tau_{p,q}\) and \(\mathrm{Kr}=\mathrm{Kr}_{\tau}\).  Let \(\pi\in S_{\mathrm{an-nc}}\left(p,q\right)\) and let \(\pi_{0}:=\left.\pi\right|_{\tau}\in S_{\mathrm{disc-nc}}\left(p,q\right)\).  Then:
\begin{enumerate}
	\item Any bridge of \(\pi\) is of the form \(\left(a_{1},\ldots,a_{r},b_{1},\ldots,b_{s}\right)\), where \(a_{1},\ldots,a_{r}\in\left[p\right]\) and \(b_{1},\ldots,b_{s}\in\left[p+1,p+q\right]\).  Therefore there is exactly one element \(a\) of a bridge in \(\left[p\right]\) (resp.\ \(\left[p+1,p+q\right]\)) with \(\pi\left(a\right)\in\left[p+1,p+q\right]\) (resp.\ \(\left[p\right]\)).
\label{item: bridge}
	\item If all cycles of \(\pi\) are bridges, they are of the form, for some \(a\in\left[p\right]\) and some \(b\in\left[p+1,p+q\right]\), \(\left(a,\tau\left(a\right),\ldots,\tau^{r-1}\left(a\right),b,\tau\left(b\right),\ldots,\tau^{s-1}\left(b\right)\right)\).  If, following one cycle of \(\tau\), we encounter the bridges of \(\pi\) in a given (cyclic) order, then they will be encountered in the reverse (cyclic) order following the other cycle of \(\tau\).  Any \(\pi\) satisfying these two conditions is in \(S_{\mathrm{ann-nc}}\left(p,q\right)\).
\label{item: all bridges}
	\item The set \(I\) of elements in bridges of \(\mathrm{Kr}\left(\pi\right)\) (resp. \(\mathrm{Kr}^{-1}\left(\pi\right)\)) comprise two orbits of \(\mathrm{Kr}\left(\pi_{0}\right)\) (resp.\ \(\mathrm{Kr}^{-1}\left(\pi_{0}\right)\)), one contained in each of \(\left[p\right]\) and \(\left[p+1,p+q\right]\).  (These may be viewed as the ``outside faces'' of the configuration; see Figure~\ref{figure: outside faces}.)

The \(a\in\left[p\right]\) (resp.\ \(\left[p+1,p+q\right]\)) with \(\pi^{-1}\left(a\right)\in\left[p+1,p+q\right]\) (resp.\ \(\left[p\right]\)) are contained in \(I\).
\label{item: outside faces}
	\item For \(\rho\in S_{\mathrm{disc-nc}}\left(p,q\right)\), \(\pi\preceq\hat{\rho}\) when \(\pi_{0}\preceq\rho\) and the bridges of \(\pi\) are contained in two cycles of \(\rho\) (necessarily one of which will be contained in \(\left[p\right]\) and the other in \(\left[p+1,p+q\right]\)).
\label{item: partial order}
	\item Let \(\rho\in S_{\mathrm{disc-nc}}\left(p,q\right)\), and let \(J\) be the set of elements in two cycles of \(\rho\), one in each cycle of \(\tau\).  Let \(\sigma_{1}\in S_{\mathrm{ann-nc}}\left(\left.\rho\right|_{J}\right)\) and \(\sigma_{2}\in S_{\mathrm{disc-nc}}\left(\left.\rho\right|_{\left[p+q\right]\setminus J}\right)\).  Then \(\sigma:=\sigma_{1}\sigma_{2}\in S_{\mathrm{nc}}\left(p,q\right)\)
\label{item: piecewise}
	\item Let \(\rho\in S_{\mathrm{disc-nc}}\left(p,q\right)\) with \(\hat{\rho}\succeq\pi\).  Let \(I\) be the set of elements in bridges of \(\mathrm{Kr_{\tau}}^{-1}\left(\pi\right)\), let \(J\) be the set of elements in the two cycles of \(\rho\) containing the bridges of \(\pi\), and let \(K:=I\cap J\) (here and in the remaining parts of this lemma).  The elements of \(K\) form two cycles in \(\mathrm{Kr}_{\rho}^{-1}\left(\pi_{0}\right)\), one contained in each cycle of \(\tau\).
\label{item: selected cycles}
	\item Any cycle of \(\mathrm{Kr}_{\rho}^{-1}\left(\pi\right)\) is either fully contained in \(K\) or outside of it.
\label{item: cycle partition}
	\item If a cycle of \(\mathrm{Kr}_{\rho}^{-1}\left(\pi\right)\) is not in \(K\), then it also appears as a cycle of \(\mathrm{Kr}_{\rho}^{-1}\left(\pi_{0}\right)\).
\label{item: spare cycles}
	\item The cycles of \(\mathrm{Kr}_{\rho}^{-1}\left(\pi\right)\) in \(K\) are all bridges; in fact, for any \(\pi_{0}\preceq\rho\in S_{\mathrm{disc-nc}}\left(p,q\right)\), any choice of noncrossing two-element bridges on \(\left.\tau\right|_{K}\) is induced by a unique \(\pi\preceq\hat{\rho}\) with \(\left.\pi\right|_{\tau}=\pi_{0}\) and this choice of outside faces of \(\pi_{0}\).
\label{item: skeleton}
\end{enumerate}
\label{lemma: technical}
\end{lemma}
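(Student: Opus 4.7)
The plan is to prove the nine parts largely in order, using the Mingo--Nica annular-nonstandard and annular-crossing conditions, together with the Kreweras identity $\mathrm{Kr}_{\tau}(\pi) = \pi^{-1}\tau$, as the primary tools.

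Part~\ref{item: bridge} follows from a direct application of the forbidden annular-nonstandard condition~\ref{item: ans2}: any interleaved pattern of two elements of $[p]$ and two elements of $[p+1,p+q]$ within a single cycle of $\pi$ realizes exactly the shape $\tau|_{\{a,b,c,d\}} = (a,b)(c,d)$, $\pi|_{\{a,b,c,d\}} = (a,c,b,d)$. Part~\ref{item: all bridges} then follows by inspection: writing each bridge in the forced form from part~\ref{item: bridge}, the annular-nonstandard condition on triples sharing a cycle of $\tau$ forces the bridge entry points to be encountered in opposite cyclic orders on the two sides, and the converse is a check that none of the five forbidden patterns can occur for such a configuration. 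Part~\ref{item: partial order} is immediate from $\pi \preceq \hat{\rho} \Leftrightarrow \mathrm{Kr}(\pi) \succeq \mathrm{Kr}(\rho)$ together with the observation that $\mathrm{Kr}(\rho)$ has no bridges when $\rho \in S_{\mathrm{disc-nc}}(p,q)$, while part~\ref{item: piecewise} is a verification that none of the forbidden patterns can span the disjoint components $J$ and $[p+q]\setminus J$, since each pattern involves only finitely many elements, which must all lie in a single noncrossing component.

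For part~\ref{item: outside faces}, I would compare $\mathrm{Kr}(\pi) = \pi^{-1}\tau$ with $\mathrm{Kr}(\pi_{0}) = \pi_{0}^{-1}\tau$ directly: the two agree except at bridge-entry points, and a short case analysis using part~\ref{item: bridge} shows that walking along $\mathrm{Kr}(\pi)$ from such a point traces out the $\mathrm{Kr}(\pi_{0})$-orbit containing it, so that exactly two orbits are affected, one in each cycle of $\tau$. Parts~\ref{item: selected cycles}--\ref{item: skeleton}, which refine the structure of $\mathrm{Kr}_{\rho}^{-1}(\pi) = \rho\pi^{-1}$, follow from the observation that outside $K$ the permutation $\pi$ agrees with $\pi_{0}$, giving part~\ref{item: spare cycles} and the exterior half of part~\ref{item: cycle partition}. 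Inside $K$, applying parts~\ref{item: all bridges} and~\ref{item: outside faces} to the induced configuration $\mathrm{Kr}_{\rho}^{-1}(\pi)|_{K}$ shows that its cycles are all two-element bridges arranged in the opposite-order pattern on the two sides, which proves part~\ref{item: selected cycles} and the interior half of part~\ref{item: cycle partition}. The bijection of part~\ref{item: skeleton} is then established by exhibiting an explicit inverse: given any noncrossing two-element bridge-matching on $\tau|_{K}$, the relation $\pi = \rho(\mathrm{Kr}_{\rho}^{-1}(\pi))^{-1}$ reconstructs a unique $\pi$ with the prescribed $\pi_{0}$ and outside faces.

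The main obstacle is part~\ref{item: outside faces}: one must match the algebraic behavior of $\pi^{-1}\tau$ on bridge-entry points to the geometric ``outside face'' picture, and in particular rule out both the possibility that a bridge-touching orbit of $\mathrm{Kr}(\pi)$ leaks into an interior orbit of $\mathrm{Kr}(\pi_{0})$ and the possibility that the two outside orbits merge into a single orbit (the latter ruled out by the annular-nonstandard condition~\ref{item: ans1} applied to elements chosen from each $\tau$-cycle). Once this identification is in hand, parts~\ref{item: selected cycles}--\ref{item: skeleton} are essentially bookkeeping that the passage between $\pi$ and $\pi_{0}$ preserves all non-bridge cycles of the Kreweras complement while replacing the bridge portion with a noncrossing skeleton of two-element bridges.
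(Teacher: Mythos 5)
Your handling of parts \ref{item: bridge}, \ref{item: all bridges}, \ref{item: partial order}, and \ref{item: piecewise} matches the paper's, but there is a genuine gap at part \ref{item: outside faces}, which you yourself flag as the main obstacle and then do not resolve. Your ``walking'' argument shows that the bridge-entry points generate bridges of \(\mathrm{Kr}^{-1}\left(\pi\right)\), but it does not rule out the possibility that the elements of those bridges meet three or more orbits of \(\mathrm{Kr}^{-1}\left(\pi_{0}\right)\) (your ``leaking'' scenario); and your fix for the ``merging'' scenario via annular-nonstandard condition 1 is misdirected, since merging is impossible for the trivial reason that every orbit of \(\mathrm{Kr}^{-1}\left(\pi_{0}\right)\) already lies in a single cycle of \(\tau\). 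The paper closes the real gap with a counting argument you are missing: since \(\pi\) connects the two circles, \(\#\left(\pi\right)+\#\left(\mathrm{Kr}^{-1}\left(\pi\right)\right)=p+q\), while \(\#\left(\pi_{0}\right)+\#\left(\mathrm{Kr}^{-1}\left(\pi_{0}\right)\right)=p+q+2\) and \(\#\left(\pi_{0}\right)=\#\left(\pi\right)+k\) with \(k\) the number of bridges, so \(\mathrm{Kr}^{-1}\left(\pi_{0}\right)\) has exactly two more cycles than \(\mathrm{Kr}^{-1}\left(\pi\right)\) has non-bridge cycles; since every non-bridge cycle of \(\mathrm{Kr}^{-1}\left(\pi\right)\) is literally a cycle of \(\mathrm{Kr}^{-1}\left(\pi_{0}\right)\), and the bridge elements must occupy at least two cycles of \(\mathrm{Kr}^{-1}\left(\pi_{0}\right)\) (one per circle), they occupy exactly two. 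Some such global count is needed; the local case analysis alone does not pin this down.

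There is a second gap at part \ref{item: selected cycles}: you propose to prove it by analyzing \(\left.\mathrm{Kr}_{\rho}^{-1}\left(\pi\right)\right|_{K}\), but the statement concerns \(\mathrm{Kr}_{\rho}^{-1}\left(\pi_{0}\right)\), not \(\mathrm{Kr}_{\rho}^{-1}\left(\pi\right)\) --- it asserts that each half of \(K\) is a single cycle of \(\mathrm{Kr}_{\rho}^{-1}\left(\pi_{0}\right)\), i.e.\ it is the relative (\(\rho\) in place of \(\tau\)) analogue of part \ref{item: outside faces}. The paper proves this by a separate lattice computation showing that the orbits of \(\mathrm{Kr}_{\rho}^{-1}\left(\pi_{0}\right)\) are exactly the blocks of \(\Pi\left(\rho\right)\wedge\Pi\left(\mathrm{Kr}_{\tau}^{-1}\left(\pi_{0}\right)\right)\), of which each half of \(K=I\cap J\) is one; nothing in your sketch addresses this. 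Once parts \ref{item: outside faces} and \ref{item: selected cycles} are in place, your treatment of parts \ref{item: cycle partition}--\ref{item: skeleton} (non-bridge cycles persist because \(\pi\) and \(\pi_{0}\) agree off the bridges; uniqueness in part \ref{item: skeleton} from invertibility of \(\mathrm{Kr}_{\rho}\)) does follow the paper, though the existence half of part \ref{item: skeleton} also needs part \ref{item: piecewise} to certify that the reconstructed \(\pi\) is annular noncrossing with \(\left.\pi\right|_{\tau}=\pi_{0}\).
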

\begin{proof}
\ref{item: bridge}.: This follows immediately from annular-nonstandard condition~\ref{item: ans2}.

\ref{item: all bridges}.: Consider a bridge of \(\pi\) \(\left(a_{1},\ldots,a_{r},b_{1},\ldots,b_{s}\right)\) with \(a_{1},\ldots,a_{r}\in\left[p\right]\), \(b_{1},\ldots,b_{s}\in\left[p+1,p+q\right]\).  If there is an \(x\in\left[p\right]\) which is not in this bridge, but which appears between \(a_{1}\) and \(a_{r}\) in \(\tau\), then there must be a \(y\in\left[p+1,p+q\right]\) in the same cycle of \(\pi\) as \(x\).  Then \(\left.\pi\right|_{\left\{a_{1},a_{r},b_{1}\right\}}=\left(a_{1},a_{r},b_{1}\right)\) but \(\left.\lambda_{x,y}\right|_{\left\{a_{1},a_{r},b_{1}\right\}}=\left(a_{1},b_{1},a_{r}\right)\), so by annular-crossing condition~\ref{item: ac2} \(\pi\notin S_{\mathrm{ann-nc}}\left(p,q\right)\).  Likewise if an \(x\in\left[p+1,p+q\right]\) appears between \(b_{1}\) and \(b_{s}\) in \(\tau\).

If the cycles of \(\pi\) do not intersect the two cycles of \(\tau\) in reverse cyclic order, then when we construct one of the \(\lambda_{x,y}\), it will be possible to find cycles of \(\pi\) with elements \(a\) and \(b\) (resp.\ \(c\) and \(d\)) from the two cycles of \(\tau\) such that \(\left.\lambda_{x,y}\right|_{\left\{a,b,c,d\right\}}=\left(a,c,b,d\right)\), satisfying annular-crossing condition \ref{item: ac3}.

Conversely, let \(\pi\) have the form described in the statement.  Then if \(\pi\left(a\right)\) is in the same cycle of \(\tau\) as \(a\), \(\mathrm{Kr}\left(\pi\right)\left(a\right)=\pi^{-1}\tau\left(a\right)=\tau^{-1}\tau\left(a\right)=a\).  If \(\pi\left(a\right)\) is not in the same cycle of \(\tau\) as \(a\), then \(\tau\left(a\right)\) is the first element of the next bridge of \(\pi\).  Then \(\pi^{-1}\tau\left(a\right)\) is the last element of this bridge in the other cycle of \(\tau\) (and is thus not equal to \(a\)).  Since the next cycle of \(\pi\) along the second cycle of \(\tau\) is the original bridge of \(\pi\) containing \(a\), \(\left(\pi^{-1}\tau\right)^{2}\left(a\right)=a\).  Thus \(\mathrm{Kr}\left(\pi\right)\) consists of two-element bridges and one-element cycles.  This construction also shows that \(\pi\) and \(\mathrm{Kr}\left(\pi\right)\) have the same number of bridges, say \(k\).  We calculate that \(\chi\left(\pi\right)=2+k+\left(p+q-k\right)-\left(p+q\right)=2\), so \(\pi\in S_{\mathrm{ann-nc}}\left(p,q\right)\).

\ref{item: outside faces}.: We demonstrate the result for \(\mathrm{Kr}^{-1}\); the proof for \(\mathrm{Kr}\) is similar.  For every \(a\in\left[p\right]\) with \(\pi^{-1}\left(a\right)\in\left[p+1,q\right]\), \(\mathrm{Kr}^{-1}\left(\pi\right)\left(a\right)=\tau\pi^{-1}\left(a\right)\in\left[p+1,p+q\right]\), and, conversely, if \(a\in\left[p\right]\) and \(\mathrm{Kr}^{-1}\left(\pi\right)\left(a\right)\in\left[p+1,p+q\right]\), then \(\pi^{-1}\left(a\right)\in\left[p+1,p+q\right]\).  By part~\ref{item: bridge}, a bridge has exactly one element it maps from \(\left[p\right]\) to \(\left[p+1,p+q\right]\) and exactly one element it maps from \(\left[p+1,p+q\right]\) to \(\left[p\right]\), so there must be the same number \(k\) of bridges in \(\pi\) as in \(\mathrm{Kr}^{-1}\left(\pi\right)\).

We know that \(\#\left(\pi\right)+\#\left(\mathrm{Kr}^{-1}\left(\pi\right)\right)=p+q\) and \(\#\left(\pi_{0}\right)+\#\left(\mathrm{Kr}^{-1}\left(\pi_{0}\right)\right)=p+q+2\).  By part~\ref{item: bridge}., \(\#\left(\pi_{0}\right)=\#\left(\pi\right)+k\) (since each bridge induces a cycle on each cycle of \(\tau\), and all other cycles induce a cycle on only one cycle of \(\tau\)), so \(\#\left(\mathrm{Kr}^{-1}\left(\pi_{0}\right)\right)=\#\left(\mathrm{Kr}^{-1}\left(\pi\right)\right)-k+2\).  The \(\#\left(\mathrm{Kr}^{-1}\left(\pi\right)\right)-k\) cycles of \(\mathrm{Kr}^{-1}\left(\pi\right)\) which are not bridges appear as cycles of \(\mathrm{Kr}^{-1}\left(\pi_{0}\right)\): if \(\tau\pi^{-1}\left(a\right)\) is in the same cycle of \(\tau\) as \(a\), then it is equal to \(\tau\pi_{0}^{-1}\left(a\right)\).  The elements of the bridges of \(\mathrm{Kr}^{-1}\left(\pi\right)\) must fall into at least two cycles of \(\mathrm{Kr}^{-1}\left(\pi_{0}\right)\), so they must comprise the two remaining cycles of \(\mathrm{Kr}^{-1}\left(\pi_{0}\right)\).

This also implies that all the \(a\in\left[p\right]\) (resp.\ \(\left[p+1,p+q\right]\)) with \(\pi^{-1}\left(a\right)\in\left[p+1,p+q\right]\) (resp.\ \(\left[p\right]\)) are in bridges of \(\mathrm{Kr}^{-1}\left(\pi\right)\), so they appear in the same cycle of \(\mathrm{Kr}^{-1}\left(\pi_{0}\right)\).

\ref{item: partial order}.: If \(\pi\preceq\hat{\rho}\), then \(\pi_{0}\preceq\pi\preceq\hat{\rho}\).  Furthermore, by part~\ref{item: outside faces}, the bridges of \(\pi=\mathrm{Kr}^{-1}\left(\mathrm{Kr}\left(\pi\right)\right)\) are contained in two cycles of \(\mathrm{Kr}^{-1}\left(\left.\mathrm{Kr}\left(\pi\right)\right|_{\tau}\right)\).  Because \(\mathrm{Kr}\left(\rho\right)\preceq\mathrm{Kr}\left(\pi\right)\), and each cycle of \(\mathrm{Kr}\left(\rho\right)\) is contained in a cycle of \(\tau\), \(\mathrm{Kr}\left(\rho\right)\preceq\left.\mathrm{Kr}\left(\pi\right)\right|_{\tau}\).  Thus each of the two cycles of \(\mathrm{Kr}^{-1}\left(\left.\mathrm{Kr}\left(\pi\right)\right|_{\tau}\right)\) containing the bridges of \(\pi\) is contained in a cycle of \(\mathrm{Kr}^{-1}\left(\mathrm{Kr}\left(\rho\right)\right)=\rho\).

Conversely, if \(\pi_{0}\preceq\rho\) and the bridges of \(\pi=\mathrm{Kr}^{-1}\left(\mathrm{Kr}\left(\pi\right)\right)\) are contained in two cycles of \(\rho=\mathrm{Kr}^{-1}\left(\mathrm{Kr}\left(\rho\right)\right)\), then each of these cycles must contain the cycles of \(\mathrm{Kr}^{-1}\left(\left.\mathrm{Kr}\left(\pi\right)\right|_{\tau}\right)\) that have the same elements as the bridges of \(\pi\).  For any \(a\) which is not an element of a bridge of \(\pi\), \(\mathrm{Kr}^{-1}\left(\left.\mathrm{Kr}\left(\pi\right)\right|_{\tau}\right)\left(a\right)=\tau\left.\mathrm{Kr}\left(\pi\right)\right|_{\tau}^{-1}\left(a\right)=\tau\mathrm{Kr}\left(\pi\right)^{-1}\left(a\right)=\tau\tau^{-1}\pi\left(a\right)=\pi_{0}\left(a\right)\) (the last step because this element is in the same cycle of \(\tau\) as \(a\)).  Thus the rest of the cycles of \(\mathrm{Kr}^{-1}\left(\left.\mathrm{Kr}\left(\pi\right)\right|_{\tau}\right)\) appear as cycles of \(\pi_{0}\).  Because \(\pi_{0}\preceq\rho\), any such cycle must be contained in a cycle of \(\rho\).  So
\[\mathrm{Kr}^{-1}\left(\left.\mathrm{Kr}\left(\pi\right)\right|_{\tau}\right)\preceq\rho\Rightarrow\mathrm{Kr}\left(\rho\right)\preceq\left.\mathrm{Kr}\left(\pi\right)\right|_{\tau}\Rightarrow\mathrm{Kr}\left(\rho\right)\preceq\mathrm{Kr}\left(\pi\right)\Rightarrow\pi\preceq\hat{\rho}\]
as desired.

\ref{item: piecewise}.: Each of the annular-nonstandard and annular-noncrossing conditions can be checked.  We note that if \(L\subseteq\left[p+q\right]\) is contained in a cycle of \(\sigma\), then \(\left.\rho\right|_{L}=\left.\tau\right|_{L}\) (since \(L\) is either contained in a single cycle of \(\rho\) or in \(J\), the union of one cycle from each of the two different cycles of \(\tau\)).  Any annular-nonstandard or annular crossing condition in \(\sigma\) implies the same annular-nonstandard or annular-noncrossing condition in \(\sigma_{1}\) or \(\sigma_{2}\) (if the elements satisfying the condition are contained in \(J\) or in \(\left[p+q\right]\setminus J\)), or a possibly different condition in \(\rho\) (if the elements satisfying the condition are in both \(J\) and \(\left[p+q\right]\setminus J\)).  The cases requiring a different condition are (\(x\) and \(y\) are elements of a bridge and hence in \(\sigma_{1}\)):
\begin{itemize}
	\item if annular-crossing condition~\ref{item: ac2} occurs in \(\sigma\) and \(\left(a,c,b\right)\) is in \(\sigma_{2}\), then it is contained in one orbit of \(\tau\) (annular-nonstandard condition~\ref{item: ans1}), and
	\item if annular-crossing condition~\ref{item: ac3} occurs in \(\sigma\), then if \(\left(a,c\right)\) and \(\left(b,d\right)\) are both from \(\sigma_{2}\) then annular-crossing condition~\ref{item: ac1} occurs in \(\sigma_{2}\), and if one occurs in \(\sigma_{1}\), then it is contained in an orbit of \(\rho\) with one of \(x\) or \(y\), so annular-crossing condition~\ref{item: ac1} occurs in \(\rho\).
\end{itemize}

\ref{item: selected cycles}.: Since \(\pi_{0}\preceq\rho\), \(\mathrm{Kr}_{\rho}^{-1}\left(\pi_{0}\right)\preceq\rho\).  Let \(\varphi:=\mathrm{Kr}_{\tau}^{-1}\left(\pi_{0}\right)=\tau\pi_{0}^{-1}\).  Since \(\pi_{0}\preceq\rho\), \(\varphi\succeq\mathrm{Kr}_{\tau}^{-1}\left(\rho\right)\), and so \(\varphi\succeq\mathrm{Kr}_{\varphi}\left(\mathrm{Kr}_{\tau}^{-1}\left(\rho\right)\right)=\left(\tau\rho^{-1}\right)^{-1}\tau\pi_{0}^{-1}=\rho\pi_{0}^{-1}=\mathrm{Kr}_{\rho}^{-1}\left(\pi_{0}\right)\).  Thus any cycle of \(\mathrm{Kr}_{\rho}^{-1}\left(\pi_{0}\right)\) is contained in the intersection of a cycle of \(\rho\) and a cycle of \(\mathrm{Kr}_{\tau}^{-1}\left(\pi_{0}\right)\).

Conversely, we show that the part of \(K\) in each of the cycles of \(\tau\) consists of one cycle of \(\mathrm{Kr}_{\rho}^{-1}\left(\pi_{0}\right)=\rho\pi_{0}^{-1}\in S_{\mathrm{disc-nc}}\left(p,q\right)\) by showing that \(\mathrm{Kr}_{\rho}^{-1}\left(\pi_{0}\right)\succeq\rho\wedge\mathrm{Kr}_{\tau}^{-1}\left(\pi_{0}\right)\), the part of \(K\) in a given cycle of \(\tau\) being a block of \(\rho\wedge\mathrm{Kr}_{\tau}^{-1}\left(\pi\right)\).  Equivalently, we show \(\mathrm{Kr}_{\tau}\left(\rho\pi_{0}^{-1}\right)\preceq\mathrm{Kr}_{\tau}\left(\rho\wedge\mathrm{Kr}_{\tau}^{-1}\left(\pi_{0}\right)\right)=\mathrm{Kr}_{\tau}\left(\rho\right)\vee\pi_{0}\): since \(\mathrm{Kr}_{\tau}\left(\rho\pi_{0}^{-1}\right)=\pi_{0}\rho^{-1}\tau\) takes any element to another element of the same orbit of the subgroup generated by \(\pi_{0}\) and \(\rho^{-1}\tau=\mathrm{Kr}_{\tau}\left(\rho\right)\), it must be smaller than their supremum.

\ref{item: cycle partition}.: This will follow from parts~\ref{item: selected cycles}.\ and \ref{item: spare cycles}.\ (since \(K\) consists of complete cycles of \(\mathrm{Kr}_{\rho}^{-1}\left(\pi_{0}\right)\), so does the remainder \(\left[p+q\right]\setminus K\), which part~\ref{item: spare cycles}.\ shows are complete cycles of \(\mathrm{Kr}_{\rho}^{-1}\left(\pi\right)\)).

\ref{item: spare cycles}.: Let \(a\notin K\).  If \(a\) is not in the two cycles of \(\rho\) containing the bridges of \(\pi\), then \(\pi^{-1}\left(a\right)=\pi_{0}^{-1}\left(a\right)\), so \(\rho\pi^{-1}\left(a\right)=\rho\pi_{0}^{-1}\left(a\right)=\mathrm{Kr}_{\rho}^{-1}\left(\pi_{0}\right)\left(a\right)\).  If \(a\) is not in the outside faces of \(\mathrm{Kr}_{\tau}^{-1}\left(\pi_{0}\right)\), then it is not in the bridges of \(\mathrm{Kr}_{\tau}^{-1}\left(\pi\right)\), so \(\tau\pi^{-1}\left(a\right)\) and hence \(\pi^{-1}\left(a\right)\) must be in the same cycle of \(\tau\) as \(a\), and so again \(\rho\pi^{-1}\left(a\right)=\rho\pi_{0}^{-1}\left(a\right)=\mathrm{Kr}_{\rho}^{-1}\left(\pi_{0}\right)\left(a\right)\).

\ref{item: skeleton}.: Let \(J\) be the set of elements in the two cycles of \(\rho\) that contain the bridges of \(\pi\).  We note that unless \(\left.\rho\right|_{J}=\left.\tau\right|_{J}\), it is possible to find three elements which satisfy annular-nonstandard condition~\ref{item: ans1}.  So \(\left.\pi\right|_{J}\in S_{\mathrm{ann-nc}}\left(\left.\rho\right|_{J}\right)\).  The elements of the bridges of \(\mathrm{Kr}_{\rho}^{-1}\left(\pi\right)\) are the elements of the outside faces of \(\mathrm{Kr}_{\rho}^{-1}\left(\pi_{0}\right)\), which contain the \(a\in\left[p\right]\) with \(\pi^{-1}\left(a\right)\in\left[p+1,p+q\right]\) (resp.\ the \(a\in\left[p+1,p+q\right]\) with \(\pi^{-1}\left(a\right)\in\left[p\right]\)), and hence intersect with the outside faces of \(\mathrm{Kr}_{\tau}^{-1}\left(\pi\right)\).  Since they also intersect with \(J\), they must comprise \(K\).

Let \(\sigma_{1}\in S_{\mathrm{ann-nc}}\left(\left.\tau\right|_{K}\right)\) with every cycle a bridge, let \(\sigma_{2}=\left.\left(\rho\pi_{0}^{-1}\right)\right|_{\left[p+q\right]\setminus K}\), and let \(\sigma:=\sigma_{1}\sigma_{2}\).  Then by part~\ref{item: piecewise}., \(\sigma\in S_{\mathrm{ann-nc}}\left(p,q\right)\), as is \(\pi:=\mathrm{Kr}_{\rho}\left(\sigma\right)\).

We now show that \(\left.\pi\right|_{\tau}=\pi_{0}\) by showing that \(\sigma\) is \(\mathrm{Kr}_{\rho}^{-1}\) of a permutation with this property, as described in part~\ref{item: outside faces}.  On \(J\), the elements of the bridges of \(\sigma\) comprise two cycles of \(\mathrm{Kr}_{\rho}^{-1}\left(\pi_{0}\right)\), while the remaining cycles are those of \(\mathrm{Kr}_{\rho}^{-1}\left(\pi_{0}\right)\) (since they do not connect the cycles of \(\tau\)).  Likewise, outside of \(J\), the cycles of \(\sigma\) are the same as those of \(\mathrm{Kr}_{\rho}^{-1}\left(\pi_{0}\right)\).  

Uniqueness follows from the invertibility of \(\mathrm{Kr}_{\rho}\).
\end{proof}

\begin{figure}
\label{figure: outside faces}
\centering
\begin{tabular}{cc}
\input{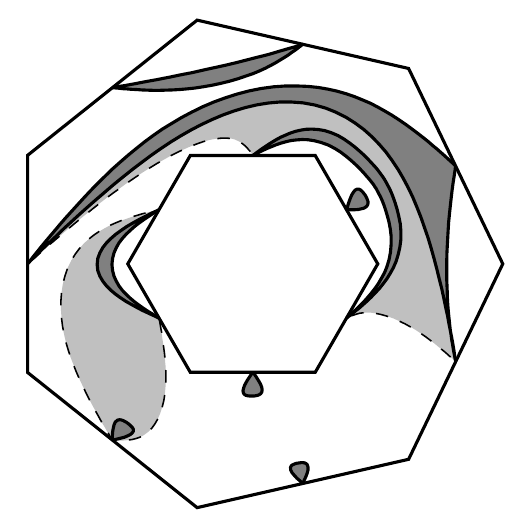_t}&\input{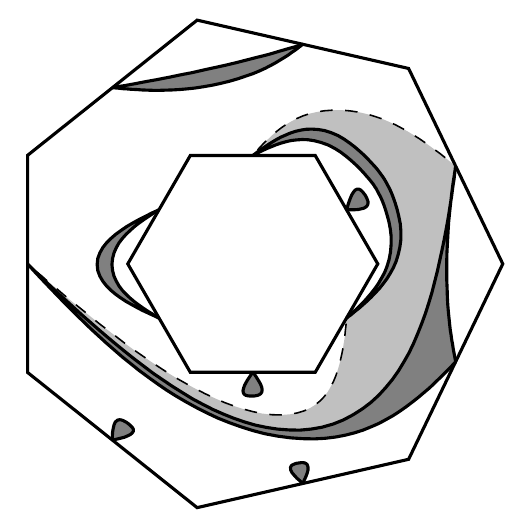_t}\\
\input{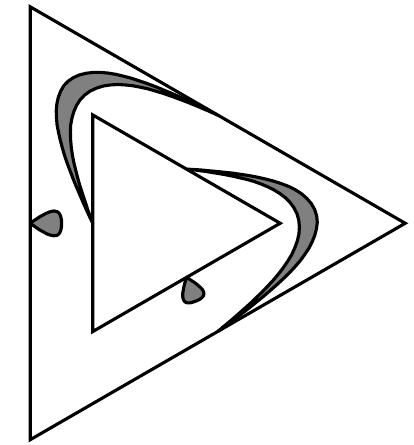_t}&\input{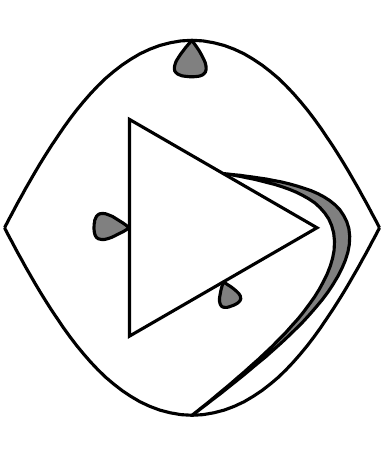_t}\\
\end{tabular}
\caption{Two annular noncrossing permutations \(\pi\) constructed from the same \(\pi_{0}\) with different outside faces (above), and their restrictions to the outside faces.  See Example~\ref{example: annuli}.}
\end{figure}

\begin{example}
In the two top diagrams of Figure~\ref{figure: outside faces}, the annulus is given by \(\tau=\tau_{6,7}\).  Both diagrams show (in dark grey) the permutation
\[\pi_{0}=\left(1\right)\left(2,6\right)\left(3,4\right)\left(5\right)\left(7,10,13\right)\left(8\right)\left(9\right)\left(11,12\right)\in S_{\mathrm{disc-nc}}\left(6,7\right)\textrm{.}\]
The complement is given by either
\[\mathrm{Kr}_{\tau}\left(\pi_{0}\right)=\pi_{0}^{-1}\tau=\left(1,6\right)\left(2,4,5\right)\left(3\right)\left(7,8,9\right)\left(10,12\right)\left(11\right)\left(13\right)\]
or
\[\mathrm{Kr}_{\tau}^{-1}\left(\pi_{0}\right)=\tau\pi_{0}^{-1}=\left(1,2\right)\left(3,5,6\right)\left(4\right)\left(7\right)\left(8,9,10\right)\left(11,13\right)\left(12\right)\]
depending on whether the white intervals between the numbers share their label with the number counterclockwise or the number clockwise.  (Note that the cycle structures of these two permutations are the same, since they are conjugate.)

In the diagram on the left, the outside faces are \(\left(2,4,5\right)\) and \(\left(7,8,9\right)\) (respectively \(\left(3,5,6\right)\) and \(\left(8,9,10\right)\)).  In the diagram on the right, the outside face are \(\left(2,4,5\right)\) and \(\left(10,12\right)\) (respectively \(\left(3,5,6\right)\) and \(\left(11,13\right)\)).

In the diagram on the left, the dark and light grey together form the diagram
\[\pi=\left(1\right)\left(2,10,13,7,6\right)\left(3,4,9\right)\left(5\right)\left(8\right)\left(11,12\right)\in S_{\mathrm{ann-nc}}\left(6,7\right)\textrm{.}\]
The bridges are formed by connecting one cycle of \(\pi_{0}\) from each cycle of \(\tau\).

We can connect any two cycles of \(\pi_{0}\) adjacent to the outside faces as long as it does not create crossings.  (If we are considering a fixed \(\rho\succeq\pi_{0}\), we connect only those contained in two of the cycles of \(\rho\).)

In the diagram on the right, the complement is
\[\mathrm{Kr}^{-1}\left(\pi\right)=\pi^{-1}\tau=\left(1,6\right)\left(2,9\right)\left(3\right)\left(4,5,7,8\right)\left(10,12\right)\left(11\right)\left(13\right)\]
with bridges \(\left(2,9\right)\) and \(\left(4,5,7,8\right)\).  The elements are exactly the elements of the two outside faces of \(\mathrm{Kr}^{-1}\left(\pi_{0}\right)\).

In the diagram on the right, the dark and light grey together form the diagram
\[\pi=\left(1\right)\left(2,13,7,10,6\right)\left(3,4\right)\left(5\right)\left(8\right)\left(9\right)\left(11,12\right)\textrm{.}\]

The lower two diagrams show the restrictions to the outside faces.  We can think of each edge as representing a block adjacent to an outside face containing that number, which may be connected to another such block on the other cycle of \(\tau\) in a noncrossing way.  The bridges of the complement  of the restriction are the same as those of the complement of the original permutation (in the case on the left \(\left(2,9\right)\left(4,5,7,8\right)\), and on the case on the right, \(\left(2,10,12,5,4\right)\)).  The cycles of the complement which do not appear do not depend on \(\pi\) but only on \(\pi_{0}\).  (We have been considering \(\rho=\tau\).  If \(\rho\) is another permutation, we would restrict to two cycles of \(\rho\).)
\label{example: annuli}
\end{example}

The following proposition gives the M\"{o}bius function:
\begin{proposition}
Here \(\tau=\tau_{p,q}\) and \(\mathrm{Kr}=\mathrm{Kr}_{\tau}\).

If \(\pi,\rho\in S_{\mathrm{nc-sd}}\left(p,q\right)\) with \(\pi\preceq\rho\), and it is not true that both \(\pi\in S_{\mathrm{disc-nc}}\left(p,q\right)\) and \(\rho\in \hat{S}_{\mathrm{disc-nc}}\left(p,q\right)\), then
\begin{equation}
\mu_{\mathrm{sd}}\left(\pi,\rho\right)=\prod_{U\in \Pi\left(\mathrm{Kr}_{\rho}\left(\pi\right)\right)}\left(-1\right)^{\left|U\right|-1}C_{\left|U\right|-1}\textrm{.}
\label{equation: nc-sd easy}
\end{equation}

Let \(\pi,\rho\in S_{\mathrm{disc-nc}}\left(p,q\right)\).  Then
\begin{multline}
\mu_{\mathrm{sd}}\left(\pi,\hat{\rho}\right)\\=\sum_{\substack{U_{1},U_{2}\in\Pi\left(\mathrm{Kr}_{\rho}\left(\pi\right)\right)\\U_{1}\subseteq\left[p\right],U_{2}\subseteq\left[p+1,p+q\right]}}\left(-1\right)^{\left|U_{1}\right|+\left|U_{2}\right|}\gamma_{\left|U_{1}\right|,\left|U_{2}\right|}\prod_{U\in\Pi\left(\mathrm{Kr}_{\rho}\left(\pi\right)\right)\setminus\left\{U_{1},U_{2}\right\}}\left(-1\right)^{\left|U\right|-1}C_{\left|U\right|-1}\\-\prod_{U\in \Pi\left(\mathrm{Kr}_{\rho}\left(\pi\right)\right)}\left(-1\right)^{\left|U\right|-1}C_{\left|U\right|-1}\textrm{.}
\label{equation: nc-sd hard}
\end{multline}
\label{proposition: self dual}
\end{proposition}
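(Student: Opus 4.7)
The plan is to treat the two displayed formulas separately: (\ref{equation: nc-sd easy}) reduces to (\ref{equation: base}) via a case analysis, while (\ref{equation: nc-sd hard}) requires M\"{o}bius inversion combined with a second-order Catalan identity.

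For (\ref{equation: nc-sd easy}), I would split on where $\pi$ and $\rho$ sit in the decomposition $S_{\mathrm{nc-sd}}(p,q) = S_{\mathrm{disc-nc}}(p,q) \sqcup S_{\mathrm{ann-nc}}(p,q) \sqcup \hat{S}_{\mathrm{disc-nc}}(p,q)$. If both are unhatted, no hatted element lies in $[\pi,\rho]$ (the poset admits no hatted-to-unhatted comparisons), so the interval coincides with $[\pi,\rho]$ in $S_{\mathrm{nc}}(p,q)$ and (\ref{equation: base}) applies directly. In the two remaining subcases---both hatted, or $\pi \in S_{\mathrm{ann-nc}}(p,q)$ and $\rho = \hat{\sigma}$---the order-reversing bijection $\widehat{\mathrm{Kr}}$ identifies $[\pi,\rho]$ with a dual interval lying entirely in the unhatted portion of $S_{\mathrm{nc-sd}}$. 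The key point for the latter subcase is that $\widehat{\mathrm{Kr}}$ restricts to $\mathrm{Kr}$ on $S_{\mathrm{ann-nc}}(p,q)$ (which is not duplicated in $S_{\mathrm{nc-sd}}$), so $\widehat{\mathrm{Kr}}(\pi) = \mathrm{Kr}(\pi) \in S_{\mathrm{ann-nc}}(p,q)$ is unhatted, making $[\mathrm{Kr}(\sigma), \mathrm{Kr}(\pi)]$ an interval in $S_{\mathrm{nc}}(p,q)$. The duality lemma on M\"{o}bius functions combined with (\ref{equation: base}) then yields the formula, and the cycle structures agree since $\mathrm{Kr}_{\mathrm{Kr}(\pi)}(\mathrm{Kr}(\sigma))$ is conjugate to $\mathrm{Kr}_\sigma(\pi)$.

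For (\ref{equation: nc-sd hard}), I would use M\"{o}bius inversion, $\mu_{\mathrm{sd}}(\pi, \hat{\sigma}) = -\sum_{\pi \preceq \tau \prec \hat{\sigma}} \mu_{\mathrm{sd}}(\pi, \tau)$, and induct on $\left|[\mathrm{Kr}(\sigma), \mathrm{Kr}(\pi)]\right|$ in $S_{\mathrm{disc-nc}}(p,q)$, which counts the hatted elements of $[\pi, \hat{\sigma}]$. The base case is $\pi = \sigma$, where the only hatted term is $\hat{\sigma}$ itself. For the inductive step, I would sort the terms $\tau \neq \hat{\sigma}$ into three types: disc-noncrossing unhatted $\tau \in [\pi, \sigma]$, whose M\"{o}bius values collapse via the identity in $S_{\mathrm{disc-nc}}(p,q)$ to $\delta_{\pi, \sigma}$; unhatted annular $\tau$, which by parts~5--9 of Lemma~\ref{lemma: technical} are parametrized by a disc-noncrossing $\pi_0 \in [\pi, \sigma]$, a choice of outside-face pair $(U_1, U_2)$ (cycles on the two sides of $\tau$), and a noncrossing two-element bridge configuration on the set $K$ from that lemma; and hatted $\hat{\tau} \neq \hat{\sigma}$, whose values come from the inductive hypothesis applied to the strictly smaller interval $[\mathrm{Kr}(\tau), \mathrm{Kr}(\pi)]$.

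The main obstacle will be the combinatorial identity in the annular case: the sum of $\mu(\pi, \tau)$ over all valid bridge configurations for a fixed outside-face pair $(U_1, U_2)$ should collapse to $(-1)^{|U_1|+|U_2|} \gamma_{|U_1|, |U_2|}$ times the base product on the remaining cycles of $\mathrm{Kr}_\sigma(\pi)$. This is where the second-order Catalan numbers of \cite{MR1761777, MR1959915} enter the argument, playing the role in the annular setting that Catalan numbers play in (\ref{equation: base}); the identity should follow from a direct enumeration of noncrossing annular pairings on $\tau|_K$ in parallel with the disc-noncrossing derivation in \cite[Lectures~9--10]{MR2266879}.
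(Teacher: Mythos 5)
Your treatment of (\ref{equation: nc-sd easy}) matches the paper's: isomorphism with the poset of Section~\ref{section: permutations} on the unhatted part, duality via \(\widehat{\mathrm{Kr}}\) on the part containing hatted elements, and the observation that \(\mathrm{Kr}_{\tau\rho^{-1}}\left(\tau\pi^{-1}\right)=\left(\mathrm{Kr}_{\rho}\left(\pi\right)\right)^{-1}\) has the right cycle type. The problems are in your plan for (\ref{equation: nc-sd hard}), where there are two genuine gaps. First, you use the downward recursion \(\mu_{\mathrm{sd}}\left(\pi,\hat{\rho}\right)=-\sum_{\sigma\in\left[\pi,\hat{\rho}\right)}\mu_{\mathrm{sd}}\left(\pi,\sigma\right)\), which forces you to sum \(\mu_{\mathrm{sd}}\left(\pi,\sigma\right)\) over \emph{every} annular \(\sigma\) in the interval (including those with \(\left.\sigma\right|_{\tau}\succ\pi\), whose Kreweras complements relative to \(\pi\) have varying cycle structure) and, worse, to sum the full inductive formula (\ref{equation: nc-sd hard}) over every hatted \(\hat{\upsilon}\) with \(\pi\preceq\upsilon\prec\rho\). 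You give no mechanism by which these two large families of terms combine or cancel, and your ``main obstacle'' paragraph addresses only the all-bridge sum for a fixed outside-face pair with \(\left.\sigma\right|_{\tau}=\pi\), which is a small fraction of the terms your recursion generates. The paper instead uses the upward recursion \(-\sum_{\sigma\in\left(\pi,\hat{\rho}\right]}\mu_{\mathrm{sd}}\left(\sigma,\hat{\rho}\right)\) together with an inclusion--exclusion argument over the lattice \(S_{\mathrm{disc-nc}}\left(p,q\right)\): the union of the intervals \(\left[\upsilon,\hat{\rho}\right]\) over disc-noncrossing \(\upsilon\succ\pi\) contributes zero because every intersection of such intervals is again an interval \(\left[\upsilon^{\prime},\hat{\rho}\right]\) (by the lattice property) over which the M\"{o}bius values sum to zero. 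This kills in one stroke everything except \(\hat{\pi}\) and the annular \(\sigma\) with \(\left.\sigma\right|_{\tau}=\pi\), and avoids any induction. If you insist on your recursion you must supply the missing cancellation; the natural fix is to switch to the paper's.

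Second, even after the reduction, the identity you defer --- that the signed sum of \(\prod_{U\in\Pi\left(\sigma\right)}\left(-1\right)^{\left|U\right|-1}C_{\left|U\right|-1}\) over all all-bridge configurations \(\sigma\in S_{\mathrm{ann-nc}}\left(r,s\right)\) equals \(\left(-1\right)^{r+s+1}\gamma_{r,s}\) --- is the technical core of the proposition, and ``a direct enumeration of noncrossing annular pairings on \(\left.\tau\right|_{K}\) in parallel with the disc-noncrossing derivation'' is not a proof of it. Unlike the disc case, the blocks here are bridges of arbitrary sizes split arbitrarily between the two cycles of \(\tau\), subject to the reverse-cyclic-order constraint of Lemma~\ref{lemma: technical}, part~\ref{item: all bridges}. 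The paper establishes the identity by a bivariate generating-function argument: it sets up recurrences \(f_{1}=f_{1}g_{1}+h_{1}\) and \(f_{2}=f_{1}g_{2}+f_{2}g_{1}+h_{2}\) coming from inserting a new bridge adjacent to the one containing \(1\) (with the two cases distinguished by whether the insertion wraps past \(r+s\)), solves them in closed form, and extracts the coefficient of \(x^{r}y^{s}\). Without this (or an equivalent derivation), your proof of (\ref{equation: nc-sd hard}) is incomplete precisely where the second-order Catalan numbers \(\gamma_{p,q}\) enter.
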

\begin{proof}
Since \(S_{\mathrm{disc-nc}}\left(p,q\right)\cup S_{\mathrm{ann-nc}}\left(p,q\right)\) is isomorphic to the poset discussed in Section~\ref{section: permutations}, the M\"{o}bius function is given by (\ref{equation: nc-sd easy}) on this part of the poset.  Likewise, since \(S_{\mathrm{ann-nc}}\left(p,q\right)\cup\hat{S}_{\mathrm{disc-nc}}\left(p,q\right)\) is dual to \(S_{\mathrm{disc-nc}}\left(p,q\right)\cup S_{\mathrm{ann-nc}}\left(p,q\right)\), for \(\pi,\rho\in S_{\mathrm{ann-nc}}\left(p,q\right)\cup\hat{S}_{\mathrm{disc-nc}}\left(p,q\right)\),
\[\mu_{\mathrm{sd}}\left(\pi,\rho\right)=\mu\left(\widehat{\mathrm{Kr}}^{-1}\left(\rho\right),\widehat{\mathrm{Kr}}^{-1}\left(\pi\right)\right)=\mu\left(\tau\pi^{-1},\tau\rho^{-1}\right)\textrm{.}\]
Since
\[\mathrm{Kr}_{\tau\rho^{-1}}\left(\tau\pi^{-1}\right)=\left(\tau\pi^{-1}\right)^{-1}\tau\rho^{-1}=\pi\rho^{-1}=\left(\mathrm{Kr}_{\rho}\left(\pi\right)\right)^{-1}\textrm{,}\]
which has the same cycle structure as \(\mathrm{Kr}_{\rho}\left(\pi\right)\), the M\"{o}bius function is as given in (\ref{equation: nc-sd easy}).

Let \(\pi,\rho\in S_{\mathrm{disc-nc}}\left(p,q\right)\).  We wish to calculate
\[\mu_{\mathrm{sd}}\left(\pi,\hat{\rho}\right)=-\sum_{\sigma\in\left(\pi,\hat{\rho}\right]}\mu\left(\sigma,\hat{\rho}\right)\textrm{.}\]
We show first that the sum of the contribution from \(\sigma\) such that \(\sigma\succeq\upsilon\succ\pi\) for some \(\upsilon\in S_{\mathrm{disc-nc}}\left(p,q\right)\) vanishes, using the Principle of Inclusion and Exclusion (see, e.g.\ \cite{MR1311922}, Chapter 5).  This set is the union of the sets \(\left[\upsilon,\hat{\rho}\right]\) for \(\upsilon\in S_{\mathrm{disc-nc}}\left(p,q\right)\), \(\upsilon\succ\pi\).  Because \(S_{\mathrm{disc-nc}}\left(p,q\right)\) is a lattice, any \(\upsilon_{1},\ldots,\upsilon_{n}\in S_{\mathrm{disc-nc}}\left(p,q\right)\) have a least upper bound \(\upsilon^{\prime}\), so the intersection \(\left[\upsilon_{1},\hat{\rho}\right]\cap\cdots\cap\left[\upsilon_{n},\hat{\rho}\right]=\left[\upsilon^{\prime},\hat{\rho}\right]\).  Thus any intersection of the \(\left[\upsilon,\hat{\rho}\right]\) will be of this form.  Since \(\sum_{\sigma\in\left[\upsilon^{\prime},\hat{\rho}\right]}\mu_{\mathrm{sd}}\left(\sigma,\hat{\rho}\right)=0\), each set of multiply counted terms vanishes, and hence the sum of \(\mu_{\mathrm{sd}}\left(\sigma,\hat{\rho}\right)\) for \(\sigma\succeq\upsilon\) for any \(\upsilon\in S_{\mathrm{disc-nc}}\left(p,q\right)\) with \(\upsilon\succ\pi\) vanishes.

Thus, we need only consider the contribution of elements in \(\left(\pi,\hat{\rho}\right]\) not larger than any \(\upsilon\in S_{\mathrm{disc-nc}}\left(p,q\right)\) with \(\upsilon\succ\pi\); i.e., \(\hat{\pi}\) and \(\sigma\in S_{\mathrm{ann-nc}}\left(p,q\right)\) where \(\left.\sigma\right|_{\tau}=\pi\) and \(\sigma\prec\hat{\rho}\).  The value of \(\mu\left(\hat{\pi},\hat{\rho}\right)\) is calculated above, accounting for the last term in (\ref{equation: nc-sd hard}).

By Lemma~\ref{lemma: technical}, part~\ref{item: selected cycles}, the \(\sigma\in S_{\mathrm{ann-nc}}\left(p,q\right)\) with \(\pi\preceq\sigma\preceq\hat{\rho}\) and \(\left.\sigma\right|_{\tau}=\pi\) can be partitioned by which cycles of \(\mathrm{Kr}_{\rho}^{-1}\left(\pi\right)\) are the outside faces.  We sum over choices of blocks \(U_{1},U_{2}\in\Pi\left(\mathrm{Kr}_{\rho}^{-1}\left(\pi\right)\right)\) (\(U_{1}\in\left[p\right]\), \(U_{2}\in\left[p+1,p+q\right]\)).  By Lemma~\ref{lemma: technical}, part~\ref{item: spare cycles}, the cycles of \(\mathrm{Kr}_{\rho}^{-1}\left(\sigma\right)\) which are not contained in \(U_{1}\cup U_{2}\) are the same as those of \(\mathrm{Kr}_{\rho}^{-1}\left(\pi\right)\) (contributing the terms in the product in the first term of (\ref{equation: nc-sd hard})).  By part~\ref{item: skeleton}, the \(\sigma\) for this choice of \(U_{1}\) and \(U_{2}\) correspond to choices of noncrossing permutations on \(\left.\rho\right|_{U_{1}\cup U_{2}}\), all of whose cycles are bridges, which form the remaining cycles of \(\mathrm{Kr}_{\rho}^{-1}\left(\pi\right)\).  We compute the sum of the contribution of such permutations.

Given \(\sigma\in S_{\mathrm{ann-nc}}\left(r,s\right)\) a collection of bridges, we may construct a collection of bridges in \(S_{\mathrm{ann-nc}}\left(r+r^{\prime},s+s^{\prime}\right)\) by adding another bridge (with \(r^{\prime}\) elements in one end and \(s^{\prime}\) elements in the other) between the bridge containing \(1\) and the next bridge along that cycle of \(\tau_{r,s}\) (the one with the next smallest numbers).  This determines where the other end of this bridge lies in the other cycle of \(\tau_{r,s}\).  If we renumber the points so that the order is preserved, the permutation on \(S_{\mathrm{ann-nc}}\left(r+r^{\prime},s+s^{\prime}\right)\) is uniquely determined, unless the other end of the bridge falls between \(r+s\) and \(r+1\) (i.e.\ if \(1\) and \(r+1\) are in the same cycle, and \(\sigma\left(r+s\right)\neq r+1\), see Figure~\ref{figure: recurrence}, upper left).  In this case there are \(s^{\prime}+1\) ways to choose how the \(s^{\prime}\) elements of that end of the bridge are divided between the beginning and end of the interval \(\left[r+r^{\prime}+1,r+r^{\prime}+s+s^{\prime}\right]\).  If the permutation is numbered so the added bridge ends at \(r+r^{\prime}+s+s^{\prime}\), the permutation will still have the property that the bridge containing \(1\) also contains \(r+r^{\prime}+1\) and \(\sigma\left(r+r^{\prime}+s+s^{\prime}\right)\neq r+r^{\prime}+1\) (Figure~\ref{figure: recurrence}, upper right); otherwise the cycle containing \(1\) will not contain \(r+r^{\prime}+1\) (Figure~\ref{figure: recurrence}, lower left).  It is thus useful to consider these cases separately.  We also note that any permutation with at least two bridges is constructed in this manner from a unique permutation and \(r,s,r^{\prime},s^{\prime}\), since the process can be reversed by removing the first bridge after the one containing \(1\) and renumbering.

We define a generating function \(f_{1}\left(x,y\right)=\sum_{r,s\geq 1}f^{\left(1\right)}_{r,s}x^{r}y^{s}\) where the coefficient of \(x^{r}y^{s}\) is the sum of contributions of collections of bridges \(\sigma\in S_{\mathrm{ann-nc}}\left(r,s\right)\) where \(1\) is in the same cycle as \(r+1\) but \(\sigma\left(r+s\right)\neq r+1\):
\[f^{\left(1\right)}_{r,s}=\sum_{\substack{\sigma\in S_{\mathrm{ann-nc}}\left(r,s\right)\\\textrm{\(\sigma\) all bridges}\\\left.\sigma\right|_{\left\{1,r+1\right\}}=\left(1,r+1\right)\\\sigma\left(r+s\right)\neq r+1}}\prod_{U\in\Pi\left(\sigma\right)}\left(-1\right)^{\left|U\right|-1}C_{\left|U\right|-1}\textrm{.}\]
We let \(g_{1}\left(x,y\right)\) be the generating function of the added bridge (denoting the generating function of the Catalan numbers by \(C\left(x\right)=\frac{1-\sqrt{1-4x}}{2x}\)):
\begin{multline*}
g_{1}\left(x,y\right)=\sum_{r,s\geq 1}\left(-1\right)^{r+s-1}C_{r+s-1}x^{r}y^{s}\\=\sum_{k\geq 2}\left(-1\right)^{k-1}C_{k-1}\left(x^{k-1}y+x^{k-2}y^{2}+\cdots+x^{2}y^{k-2}+xy^{k-1}\right)\\=\sum_{k\geq 2}\left(-1\right)^{k-1}C_{k-1}xy\frac{x^{k-1}-y^{k-1}}{x-y}=xy\frac{\left(C\left(-x\right)-1\right)-\left(C\left(-y\right)-1\right)}{x-y}\\=\frac{1}{2}+\frac{y\sqrt{1+4x}-x\sqrt{1+4y}}{2\left(x-y\right)}\textrm{.}
\end{multline*}
We let \(h_{1}\left(x,y\right)=\sum_{x,y\geq 1}h^{\left(1\right)}x^{r}y^{s}\) be the generating function of the contribution of the permutations \(\sigma\) with only one bridge, where \(\sigma\left(r+s\right)\neq r+1\).  There are \(r\) choices for the first element \(a\) of the bridge in \(\left[r\right]\) and only one possible choice of first and last elements of the bridge in \(\left[r+1,r+s\right]\), so
\begin{multline*}
h_{1}\left(x,y\right)=\sum_{r,s\geq 1}\left(-1\right)^{r+s}rC_{r+s-1}x^{r}y^{s}=x\frac{\partial}{\partial x}g_{1}\left(x,y\right)\\=-\frac{xy\left(1+2x+2y-\sqrt{1+4x}\sqrt{1+4y}\right)}{2\sqrt{1+4x}\left(x-y\right)^{2}}\textrm{.}
\end{multline*}
(Here and throughout the generating functions may be derived by standard algebraic calculation, and we will skip many intermediate steps.)  Then \(f_{1}\) satisfies the recurrence relation
\[f_{1}=f_{1}g_{1}+h_{1}\]
so
\[f_{1}\left(x,y\right)=\frac{h_{1}\left(x,y\right)}{1-g_{1}\left(x,y\right)}=\frac{xy\left(1+2x+2y-\sqrt{1+4x}\sqrt{1+4y}\right)}{\sqrt{1+4x}\left(x-y\right)\left(y+y\sqrt{1+4x}-x-x\sqrt{1+4y}\right)}\textrm{.}\]

We define \(f_{2}\left(x,y\right)=\sum_{r,s\geq 1}f^{\left(2\right)}_{r,s}x^{r}y^{s}\) so \(f^{\left(2\right)}_{r,s}\) is the contribution of diagrams not considered in \(f_{2}\), i.e.\ where \(1\) is not in the same bridge as \(r+1\) or \(\sigma\left(r+s\right)=r+1\):
\[f^{\left(2\right)}_{r,s}=\sum_{\substack{\sigma\in S_{\mathrm{ann-nc}}\left(r,s\right)\\\textrm{\(\sigma\) all bridges}\\\textrm{\(\left.\sigma\right|_{\left\{1,r+1\right\}}=\left(1\right)\left(r+1\right)\) or \(\sigma\left(r+s\right)=r+1\)}}}\prod_{U\in\Pi\left(\sigma\right)}\left(-1\right)^{\left|U\right|-1}C_{\left|U\right|-1}\textrm{.}\]
We let \(g_{2}\left(x,y\right)=\sum_{r,s\geq 1}g^{\left(2\right)}_{r,s}x^{r}y^{s}\) be the generating function for the contribution of bridges added to a \(\sigma\) with \(1\) and \(r+1\) in the same bridge and \(\sigma\left(r+s\right)=r+1\) such that in the new \(\sigma^{\prime}\), \(r+r^{\prime}+s+s^{\prime}\) is no longer in the same cycle as \(1\).  There are \(s^{\prime}\) ways to add the new bridge, so:
\begin{multline*}
g_{2}\left(x,y\right)=\sum_{r,s\geq 1}\left(-1\right)^{r+s-1}sC_{r+s-1}x^{r}y^{s}=y\frac{\partial}{\partial y}g_{1}\left(x,y\right)\\=-\frac{xy\left(1+2x+2y-\sqrt{1+4x}\sqrt{1+4y}\right)}{2\sqrt{1+4y}\left(x-y\right)^{2}}\textrm{.}
\end{multline*}
Let \(h_{2}=\sum_{r,s\geq 1}h^{\left(2\right)}_{r,s}x^{r}y^{s}\) be the generating function of the \(\sigma\in S_{\mathrm{ann-nc}}\left(r,s\right)\) with only one bridge and \(\sigma\left(r+s\right)=r+1\).  There are \(r\) choices for the first element from \(\left[p\right]\) and \(s-1\) choices for the first element from \(\left[p+1,p+q\right]\) which satisfy this condition, so:
\begin{multline*}
h_{2}\left(x,y\right)=y\frac{\partial}{\partial y}h_{1}\left(x,y\right)-h_{1}\left(x,y\right)\\=\frac{xy^{2}\left(\left(1+x+3y\right)\sqrt{1+4x}-\left(1+3x+y\right)\sqrt{1+4y}\right)}{\sqrt{1+4x}\sqrt{1+4y}\left(x-y\right)^{3}}\textrm{.}
\end{multline*}
Then \(f_{2}\) satisfies the recurrence relation:
\[f_{2}=f_{1}g_{2}+f_{2}g_{1}+h_{2}\]
so
\begin{multline*}
f_{2}\left(x,y\right)=\frac{f_{1}\left(x,y\right)g_{2}\left(x,y\right)+h_{2}\left(x,y\right)}{1-g_{1}\left(x,y\right)}\\=\frac{x^{2}-2xy+2x^{2}y-6xy^{2}-\left(x-y\right)^{2}\sqrt{1+4y}+y^{2}\sqrt{1+4x}\sqrt{1+4y}}{2\sqrt{1+4x}\sqrt{1+4y}\left(x-y\right)^{2}}\textrm{.}
\end{multline*}
We let \(f\left(x,y\right)=\sum_{r,s\geq 1}f_{r,s}x^{r}y^{s}\) whose coefficients are the sum of the contribution of all the collections of bridges:
\[f_{r,s}=-\sum_{\substack{\sigma\in S_{\mathrm{ann-nc}}\left(r,s\right)\\\textrm{\(\sigma\) all bridges}}}\prod_{U\in\Pi\left(\sigma\right)}\left(-1\right)^{\left|U\right|-1}C_{\left|U\right|-1}\]
(negative since this is the value that will contribute to \(\mu\left(\pi,\hat{\rho}\right)\).  Then
\[f\left(x,y\right)=-f_{1}\left(x,y\right)-f_{2}\left(x,y\right)=\frac{xy\left(1+2x+2y-\sqrt{1+4x}\sqrt{1+4y}\right)}{2\sqrt{1+4x}\sqrt{1+4y}\left(x-y\right)^{2}}\textrm{.}\]
Since
\[\left.\frac{\partial}{\partial z}f\left(xz,yz\right)\right|_{z=1}=2\frac{x}{\left(1+4x\right)^{3/2}}\frac{y}{\left(1+4y\right)^{3/2}}\]
and
\begin{multline*}
\frac{x}{\left(1+4x\right)^{3/2}}=\sum_{k\geq 1}\left(-1\right)^{k-1}\frac{1\cdot 3\cdot 5\cdot\cdots\cdot\left(2k-1\right)}{2^{k-1}\left(k-1\right)!}\left(4x\right)^{k}\\=\sum_{k\geq 1}\left(-1\right)^{k-1}\frac{\left(2k-1\right)!}{\left(k-1\right)!^{2}}x^{k}
\end{multline*}
the coefficient \(f_{r,s}\) of \(x^{r}y^{s}\) in \(f\) is
\[\left(-1\right)^{p+q}\frac{2}{p+q}\frac{\left(2p-1\right)!}{\left(p-1\right)!^{2}}\frac{\left(2q-1\right)!}{\left(q-1\right)!^{2}}\textrm{.}\]
The expression for \(\mu\left(\pi,\hat{\rho}\right)\) follows.
\end{proof}

\begin{figure}
\centering
\begin{tabular}{cc}
\input{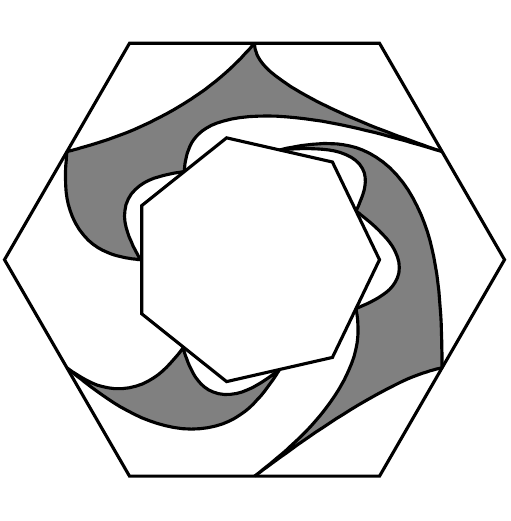_t}&\input{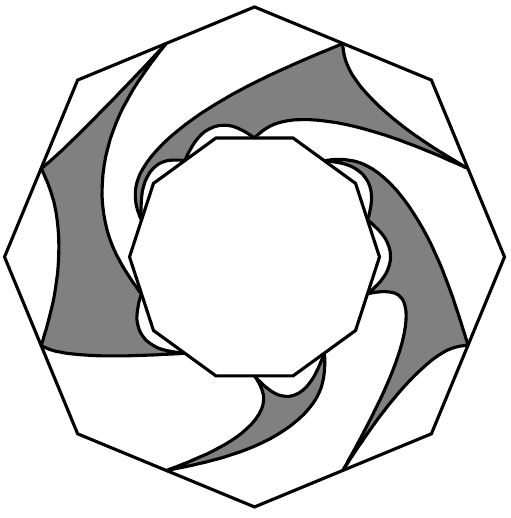_t}\\
\input{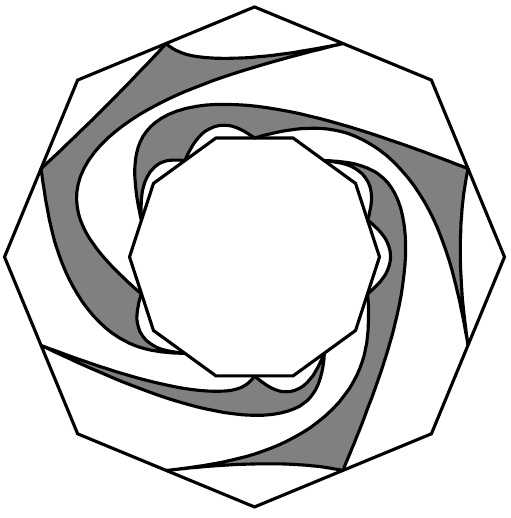_t}&\input{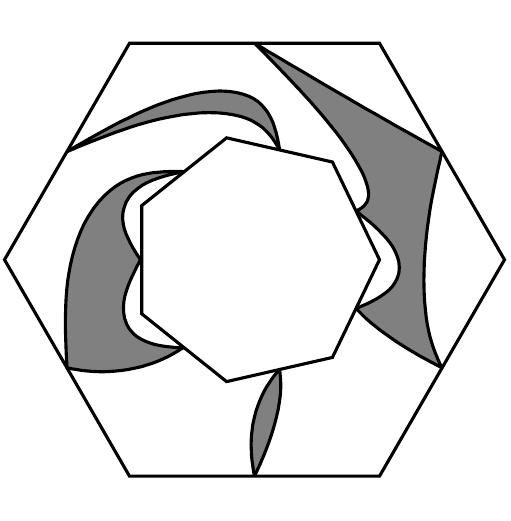_t}\\
\end{tabular}
\caption{Top left: A diagram which contributes \(14\cdot 14\cdot 2x^{7}y^{6}=384x^{7}y^{6}\) to \(f_{1}\).  Top right: A diagram constructed from top left by adding a bridge.  This diagram contributes to \(f_{1}\).  Bottom left: A diagram constructed from top left by adding a bridge.  This diagram contributes to \(f_{2}\).  Bottom right: A diagram which contributes to \(f_{2}\).}
\label{figure: recurrence}
\end{figure}

\section{Minimal Length Partitioned Permutations}

\label{section: partitioned permutations}

Partitioned permutations, ordered pairs \(\left({\cal U},\pi\right)\) of a partition \({\cal U}\) and a permutation \(\pi\) where \({\cal U}\succeq\Pi\left(\pi\right)\), appear in higher-order freeness (see \cite{MR2294222, MR2302524}).  In particular, the partitioned permutations of minimal length correspond to asymptotically surviving terms in computations of cumulants of traces.  We will not give the definition of length here, but the set of relevant partitioned permutations is given below.  The M\"{o}bius function of such partitioned permutations is discussed in \cite{ppMobius}.

\begin{definition}
We define the set \({\cal PS}^{\prime}\left(r,s\right)\subseteq{\cal P}\left(r+s\right)\times S_{nc}\left(r,s\right)\) to be those where \(\pi\in S_{\mathrm{nc}}\left(p,q\right)\), and either \({\cal U}=\Pi\left(\pi\right)\); or \(\pi\in S_{\mathrm{disc-nc}}\left(p,q\right)\) and \({\cal U}\) is constructed by replacing two blocks of \(\Pi\left(\pi\right)\), one contained in each of \(\left[p\right]\) and \(\left[p+1,p+q\right]\), with their union (this block will be referred to as the nontrivial block).

In this poset, an element with a nontrivial block is never smaller than an element without one.  For \(\left({\cal U},\pi\right),\left({\cal V},\rho\right)\in{\cal PS}^{\prime}\left(p,q\right)\) where it is not the case that \({\cal U}\) has a nontrivial block and \({\cal V}\) does not have one, we let \(\left({\cal U},\pi\right)\preceq\left({\cal V},\rho\right)\) if \({\cal U}\preceq{\cal V}\) and \(\pi\in S_{\mathrm{nc}}\left(\rho\right)\).

We will often abbreviate the notation for elements \(\left({\cal U},\pi\right)\in{\cal PS}^{\prime}\left(p,q\right)\).  We will use a permutation \(\pi\) to denote \(\left(\Pi\left(\pi\right),\pi\right)\), and a partition to denote \(\left({\cal U},\Pi^{-1}\left(\left.{\cal U}\right|_{\tau}\right)\right)\).

We denote the smallest element, \(\left(1\right)\cdots\left(p+q\right)\), by \(0\), and the largest element, \(\left[1,\ldots,p+q\right]\), by \(1\).

We denote the M\"{o}bius function on this poset by \(\mu_{{\cal PS}^{\prime}}\).
\end{definition}

\begin{proposition}
Let \(\left({\cal U},\pi\right),\left({\cal V},\rho\right)\in{\cal PS}^{\prime}\left(p,q\right)\) with \(\left({\cal U},\pi\right)\preceq\left({\cal V},\rho\right)\).  If it is not the case that both \({\cal U}\in S_{\mathrm{disc-nc}}\left(p,q\right)\) and \({\cal V}\neq\Pi\left(\rho\right)\), then
\begin{equation}
\mu_{{\cal PS}^{\prime}}\left(\left({\cal U},\pi\right),\left({\cal V},\rho\right)\right)=\prod_{U\in\Pi\left(\mathrm{Kr}_{\rho}\left(\pi\right)\right)}\left(-1\right)^{\left|U\right|-1}C_{\left|U\right|-1}\textrm{.}
\label{equation: PS easy}
\end{equation}
If \({\cal U}\in S_{\mathrm{disc-nc}}\left(p,q\right)\) and \({\cal V}\) has nontrivial block \(V_{0}\), then
\begin{multline}
\mu_{{\cal PS}^{\prime}}\left(\left({\cal U},\pi\right),\left({\cal V},\rho\right)\right)\\=\sum_{\substack{U_{1},U_{2}\in\Pi\left(\left.\mathrm{Kr}_{\rho}\left(\pi\right)\right|_{V_{0}}\right)\\U_{1}\subseteq\left[p\right],U_{2}\subseteq\left[p+1,p+q\right]}}\left(-1\right)^{\left|U_{1}\right|+\left|U_{2}\right|}\gamma_{\left|U_{1}\right|,\left|U_{2}\right|}\\\times\prod_{U\in\Pi\left(\mathrm{Kr}_{\rho}\left(\pi\right)\right)\setminus\left\{U_{1},U_{2}\right\}}\left(-1\right)^{\left|U\right|-1}C_{\left|U\right|-1}\\-\#\left(\left.\pi\right|_{V_{0}\cap\left[p\right]}\right)\#\left(\left.\pi\right|_{V_{0}\cap\left[p+1,p+q\right]}\right)\prod_{U\in\Pi\left(\mathrm{Kr}_{\rho}\left(\pi\right)\right)}\left(-1\right)^{\left|U\right|-1}C_{\left|U\right|-1}\textrm{.}
\label{equation: PS hard}
\end{multline}
\end{proposition}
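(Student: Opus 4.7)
The plan is to mirror the proof of Proposition~\ref{proposition: self dual}, exploiting the close structural parallel between \({\cal PS}^{\prime}\left(p,q\right)\) and \(S_{\mathrm{nc-sd}}\left(p,q\right)\): the partitions with a nontrivial block play the role of the hatted elements, and the technical Lemma~\ref{lemma: technical} applies verbatim.

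For the easy formula (\ref{equation: PS easy}), I would first verify that the interval \(\left[\left({\cal U},\pi\right),\left({\cal V},\rho\right)\right]\) is order-isomorphic to the interval \(\left[\pi,\rho\right]\) in the poset of Section~\ref{section: permutations}, so that (\ref{equation: base}) applies directly.  When \({\cal V}=\Pi\left(\rho\right)\), every intermediate \(\left({\cal W},\sigma\right)\) must have \({\cal W}=\Pi\left(\sigma\right)\) (since an element with a nontrivial block is never below one without), giving the isomorphism by forgetting the partition.  When \({\cal U}\) has a nontrivial block, every intermediate element has a nontrivial block, which is uniquely determined by \(\sigma\) as the union of the two cycles of \(\sigma\) containing the two parts of the nontrivial block of \({\cal U}\); so again the interval is isomorphic to \(\left[\pi,\rho\right]\) in \(S_{\mathrm{disc-nc}}\left(p,q\right)\).

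For the hard formula (\ref{equation: PS hard}), the plan is to write \(\mu_{{\cal PS}^{\prime}}\left(\pi,\left({\cal V},\rho\right)\right)=-\sum_{\sigma\in\left(\pi,\left({\cal V},\rho\right)\right]}\mu_{{\cal PS}^{\prime}}\left(\sigma,\left({\cal V},\rho\right)\right)\) and discard by inclusion-exclusion, as in Proposition~\ref{proposition: self dual}, the contribution of those \(\sigma\) lying above some \(\upsilon\in S_{\mathrm{disc-nc}}\left(p,q\right)\) with \(\pi\prec\upsilon\preceq\rho\); the lattice property of \(S_{\mathrm{disc-nc}}\left(p,q\right)\) ensures intersections of intervals \(\left[\upsilon,\left({\cal V},\rho\right)\right]\) have the same form.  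The remaining \(\sigma\) fall into two classes.  First, pairs \(\left({\cal V}^{\prime},\pi\right)\) formed by choosing one cycle of \(\pi\) in \(V_{0}\cap\left[p\right]\) and one in \(V_{0}\cap\left[p+1,p+q\right]\) for the nontrivial block of \({\cal V}^{\prime}\); there are \(\#\left(\left.\pi\right|_{V_{0}\cap\left[p\right]}\right)\cdot\#\left(\left.\pi\right|_{V_{0}\cap\left[p+1,p+q\right]}\right)\) such choices, each contributing the easy-case value \(\prod\left(-1\right)^{\left|U\right|-1}C_{\left|U\right|-1}\), and together with the overall minus sign these produce the correction term in (\ref{equation: PS hard}).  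Second, genuinely annular \(\sigma^{\prime}\in S_{\mathrm{ann-nc}}\left(p,q\right)\) with \(\left.\sigma^{\prime}\right|_{\tau}=\pi\) and bridges contained in \(V_{0}\).

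For these annular \(\sigma^{\prime}\) I would invoke Lemma~\ref{lemma: technical} exactly as in the self-dual proof: the \(\sigma^{\prime}\) are parameterized by a choice of outside faces \(U_{1},U_{2}\) among the cycles of \(\mathrm{Kr}_{\rho}\left(\pi\right)\) contained in \(V_{0}\), together with an all-bridges permutation on \(\left.\tau\right|_{U_{1}\cup U_{2}}\), with the non-bridge cycles of \(\mathrm{Kr}_{\rho}\left(\sigma^{\prime}\right)\) agreeing with those of \(\mathrm{Kr}_{\rho}\left(\pi\right)\).  Summing the easy-case contributions \(\prod\left(-1\right)^{\left|U\right|-1}C_{\left|U\right|-1}\) over the bridge configurations reduces, for each fixed \(U_{1},U_{2}\), to the generating function \(f\left(x,y\right)\) from Proposition~\ref{proposition: self dual}, whose \(\left(\left|U_{1}\right|,\left|U_{2}\right|\right)\)-coefficient is \(\left(-1\right)^{\left|U_{1}\right|+\left|U_{2}\right|}\gamma_{\left|U_{1}\right|,\left|U_{2}\right|}\); multiplying by the product over the non-bridge cycles yields the first term in (\ref{equation: PS hard}).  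The main obstacle I expect is carefully justifying the restriction of the outside faces to \(V_{0}\) and the parameterization via Lemma~\ref{lemma: technical}; the remainder is a faithful transcription of the self-dual argument.
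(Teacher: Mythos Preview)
Your approach is essentially the paper's: reduce to the self-dual computation via an interval isomorphism for the easy case, and for the hard case run the same inclusion--exclusion over \(\upsilon\in S_{\mathrm{disc-nc}}\left(p,q\right)\), leaving exactly the nontrivial-block partners \(\left({\cal V}^{\prime},\pi\right)\) (giving the correction term with the cycle-count prefactor) and the annular \(\sigma^{\prime}\) with \(\left.\sigma^{\prime}\right|_{\tau}=\pi\) (handled by Lemma~\ref{lemma: technical} and the generating function \(f\left(x,y\right)\) from Proposition~\ref{proposition: self dual}).

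There is one small gap in your easy-case analysis. You split into the sub-cases ``\({\cal V}=\Pi\left(\rho\right)\)'' and ``\({\cal U}\) has a nontrivial block,'' but this omits the third possibility allowed by the hypothesis: \({\cal U}=\Pi\left(\pi\right)\) with \(\pi\in S_{\mathrm{ann-nc}}\left(p,q\right)\) while \({\cal V}\) has a nontrivial block. In that situation there is no interval \(\left[\pi,\rho\right]\) in the poset of Section~\ref{section: permutations} to be isomorphic to, since \(\pi\) is annular and \(\rho\) is disc-noncrossing. The paper handles this by first reducing via the product decomposition to \(\left({\cal V},\rho\right)=1\), and then observing that the interval \(\left[\pi,1\right]\subseteq{\cal PS}^{\prime}\left(p,q\right)\) is isomorphic to \(\left[\pi,1\right]\subseteq S_{\mathrm{nc-sd}}\left(p,q\right)\): the elements \(\left({\cal W},\sigma\right)\) with nontrivial block are in bijection with \(\hat{\sigma}\succeq\pi\), since by Lemma~\ref{lemma: technical}, part~\ref{item: partial order}, the nontrivial block of \({\cal W}\) is forced to be the union of the two cycles of \(\sigma\) containing the bridges of \(\pi\). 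Then (\ref{equation: nc-sd easy}) gives the value. This is easy to patch in, and the rest of your outline matches the paper.
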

\begin{proof}
Since the portion of \({\cal PS}^{\prime}\left(p,q\right)\) excluding the elements with a nontrivial block is isomorphic to the poset of Section~\ref{section: permutations}, here the M\"{o}bius function will be given by (\ref{equation: PS easy}).

The interval \(\left[\left({\cal U},\pi\right),\left({\cal V},\rho\right)\right]\) is the product poset over blocks \(V\in{\cal V}\) of the restrictions of the ordered pairs to \(V\) (Lemma~\ref{lemma: technical}, part~\ref{item: piecewise}).  Over any trivial block, the contribution will be given by the the M\"{o}bius function of the disc-noncrossing permutations given in (\ref{equation: disc noncrossing}).  So we may restrict our attention to the nontrivial block, and thus to the case where \(\left({\cal V},\rho\right)=1\).

If \(\pi\in S_{\mathrm{ann-nc}}\left(p,q\right)\), the interval \(\left[\pi,1\right]\) consists of \(\sigma\in S_{\mathrm{ann-nc}}\left(p,q\right)\) and \(\left({\cal W},\sigma\right)\) with nontrivial block.  In the latter case, these are in bijection with \(\sigma\in S_{\mathrm{disc-nc}}\left(p,q\right)\) with \(\hat{\sigma}\succeq\pi\), since the nontrivial block of \({\cal W}\) is chosen uniquely to contain the two cycles of \(\sigma\) containing the bridges of \(\pi\).  This interval is thus isomorphic to the interval \(\left[\pi,1\right]\subseteq S_{\mathrm{nc-sd}}\left(p,q\right)\), and hence the M\"{o}bius function is given by (\ref{equation: PS easy}).

If \(\pi\in S_{\mathrm{disc-nc}}\left(p,q\right)\), then we may use apply the Principle of Inclusion and Exclusion as in Proposition~\ref{proposition: self dual}.  We may again ignore the contribution of any \(\left({\cal W},\sigma\right)\succeq\upsilon\succ\pi\) for any \(\upsilon\in S_{\mathrm{disc-nc}}\left(p,q\right)\).  The remaining elements of \({\cal PS}^{\prime}\left(p,q\right)\) are the \(\left({\cal W},\pi\right)\) with a nontrivial block and the \(\sigma\in S_{\mathrm{ann-nc}}\left(p,q\right)\) with \(\left.\sigma\right|_{\tau}=\pi\).  The contribution of the latter is as in (\ref{equation: nc-sd hard}).  For any \(\left({\cal W},\pi\right)\) with nontrivial block, \(\mu_{{\cal PS}^{\prime}}\left({\cal W},1\right)=\mu_{\mathrm{sd}}\left(\pi,1\right)\), and since the nontrivial block of \({\cal W}\) contains two blocks of \(\Pi\left(\pi\right)\), one contained in each of \(\left[p\right]\) and \(\left[p+1,p+q\right]\), there are \(\#\left(\left.\pi\right|_{\left[p\right]}\right)\#\left(\left.\pi\right|_{\left[p+1,p+q\right]}\right)\) such \({\cal W}\), hence the last term of (\ref{equation: PS hard}).
\end{proof}

\section{Annular Noncrossing Partitions}

\label{section: partitions}

Annular noncrossing partitions are discussed in \cite{MR1722987}.  The partition records which elements are in the same block, and it must be possible to represent the partition in a noncrossing way on the annulus.  However, distinct configurations as defined in previous sections may be represented by the same partition.  Specifically, any noncrossing partition with exactly one bridge may be represented by \(r\times s\) noncrossing permutations, where there are \(r\) elements of the bridge on one side of the bridge and \(s\) on the other side (see the Annular Nonstandard Condition~\ref{item: ans1} and Lemma~\ref{lemma: technical}, part~\ref{item: bridge}).  All other noncrossing partitions correspond to exactly one noncrossing permutation.

\begin{definition}
We let \({\cal P}_{\mathrm{nc}}\left(p,q\right)\) be the set of \(\Pi\left(\pi\right)\) for \(\pi\in S_{\mathrm{nc}}\left(p,q\right)\).

For \({\cal U},{\cal V}\in{\cal P}_{\mathrm{nc}}\left(p,q\right)\), we say \({\cal U}\preceq{\cal V}\) if this is the case as partitions.

We denote the smallest element, \(\left\{\left\{1\right\},\ldots,\left\{p+q\right\}\right\}\), by \(0\), and the largest element, \(\left\{\left[p+q\right]\right\}\), by \(1\).

We denote the M\"{o}bius function on this poset by \(\mu_{{\cal P}_{\mathrm{nc}}}\).
\end{definition}

This definition means that configurations which are not comparable as elements of \(S_{\mathrm{nc}}\left(p,q\right)\) are comparable in \({\cal P}_{\mathrm{nc}}\left(p,q\right)\), as shown in the following example.

\begin{example}
The partition \({\cal U}=\left\{\left\{1,5\right\},\left\{2,6\right\},\left\{3\right\},\left\{4\right\}\right\}\) (top left of Figure~\ref{figure: partitions}) is smaller in this poset than the partition \({\cal V}=\left\{\left\{1,2,5,6\right\},\left\{3,4\right\}\right\}\) (top right of Figure~\ref{figure: partitions}).  This is a necessary consequence of \({\cal U}\prec{\cal W}\prec{\cal V}\), where \({\cal W}=\left\{\left\{1,2,5,6\right\},\left\{3\right\},\left\{4\right\}\right\}\) (bottom left and right of Figure~\ref{figure: partitions}, which are represented by the same partition).
\label{example: partitions}
\end{example}

\begin{figure}
\centering
\begin{tabular}{cc}
\input{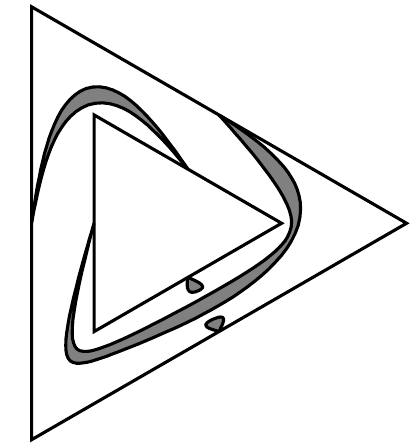_t}&\input{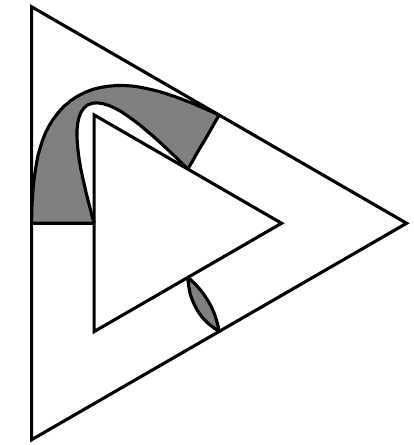_t}\\
\input{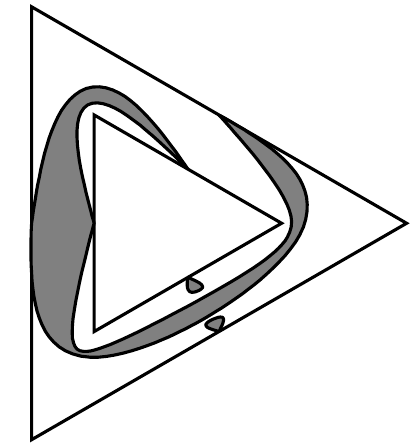_t}&\input{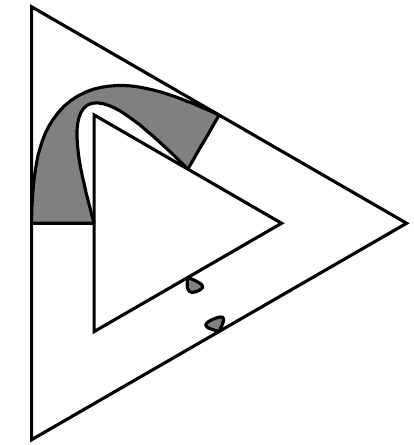_t}\\
\end{tabular}
\caption{The partitions described in Example~\ref{example: partitions}.  Top left: \({\cal U}\).  Top right: \({\cal V}\).  Bottom left and right: two embeddings of the partition \({\cal W}\).}
\label{figure: partitions}
\end{figure}

In the following lemma we give an expression for a quantity which appears in the expression for the M\"{o}bius function for the noncrossing partitions.  The quantity may be viewed as the sum of \(\left(-1\right)^{k-1}C_{k-1}\times\left(-1\right)^{l-1}C_{l-1}\), where \(k\) and \(l\) are the number of elements respectively of two bridges with a total of \(p\) elements on one side and a total of \(q\) elements on the other, where one bridge is assumed to occupy the first segment of both \(\left[p\right]\) and \(\left[p+1,p+q\right]\), and the other the second segment.
\begin{lemma}
The following identity holds:
\begin{multline}
\sum_{i=1}^{p-1}\sum_{j=1}^{q-1}\left[\left(-1\right)^{i+q-j-1}C_{i+q-j-1}\right]\left[\left(-1\right)^{p-i+j-1}C_{p-i+j-1}\right]\\=\frac{\left(-1\right)^{p+q}}{\left(p+q\right)\left(p+q-1\right)}\frac{\left(2p\right)!}{p!\left(p-1\right)!}\frac{\left(2q\right)!}{q!\left(q-1\right)!}+\left(-1\right)^{p+q-1}C_{p+q-1}\\=\left(-1\right)^{p+q}\frac{2}{\left(p+q-1\right)}\gamma_{p,q}+\left(-1\right)^{p+q-1}C_{p+q-1}
\label{equation: two bridges}
\end{multline}
and thus
\begin{multline}
\sum_{i=1}^{p}\sum_{j=1}^{q}\left[\left(-1\right)^{i+q-j-1}C_{i+q-j-1}\right]\left[\left(-1\right)^{p-i+j-1}C_{p-i+j-1}\right]\\=\frac{\left(-1\right)^{p+q}}{\left(p+q\right)\left(p+q-1\right)}\frac{\left(2p\right)!}{p!\left(p-1\right)!}\frac{\left(2q\right)!}{q!\left(q-1\right)!}=\left(-1\right)^{p+q}\frac{2}{\left(p+q-1\right)}\gamma_{p,q}\textrm{.}
\label{equation: partition face}
\end{multline}
\end{lemma}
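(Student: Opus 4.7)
The two identities differ only by the boundary pairs $(i,j)$ with $i=p$ or $j=q$, and by the standard Catalan convolution $\sum_{k+l=N}C_{k}C_{l}=C_{N+1}$ these boundary contributions simplify cleanly. Consequently my plan is to prove (\ref{equation: partition face}) first via generating functions and then derive (\ref{equation: two bridges}) as a direct corollary. Setting $a:=i-1$ and $b:=q-j$, so that $(a,b)$ ranges over $\left[0,p-1\right]\times\left[0,q-1\right]$, the indices of the Catalans become $a+b$ and $(p-1-a)+(q-1-b)$, and every term carries the same total sign $(-1)^{p+q}$. Hence the bivariate generating function
\[F(x,y):=\sum_{p,q\geq 1}\left(\sum_{i=1}^{p}\sum_{j=1}^{q}(-1)^{p+q}C_{i+q-j-1}C_{p-i+j-1}\right)x^{p}y^{q}\]
factorizes as $F(x,y)=xy\,G(-x,-y)^{2}$, where $G(u,v):=\sum_{m,n\geq 0}C_{m+n}u^{m}v^{n}=\frac{uC(u)-vC(v)}{u-v}$. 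Using $xC(x)=(1-\sqrt{1-4x})/2$, this yields the closed form
\[F(x,y)=\frac{xy\bigl(1+2x+2y-\sqrt{(1+4x)(1+4y)}\bigr)}{2(x-y)^{2}}\textrm{.}\]

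To extract $[x^{p}y^{q}]F$, I would follow the strategy used in the proof of Proposition~\ref{proposition: self dual}: compute $\left.\partial_{z}F(xz,yz)\right|_{z=1}$ and verify that it splits as a product of one-variable series whose coefficients involve $\frac{(2n)!}{n!(n-1)!}$ up to signs. This follows from the well-known identity $\sum_{n\geq 0}\binom{2n}{n}x^{n}=(1-4x)^{-1/2}$ by differentiation, giving $\sum_{n\geq 1}n(n+1)C_{n}(-x)^{n}=-2x/(1+4x)^{3/2}$, and the factor $1/\bigl((p+q)(p+q-1)\bigr)$ on the right-hand side of (\ref{equation: partition face}) arises by antidifferentiation in $z$ and evaluation at $z=1$. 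An alternative route is the Lagrange-inversion substitution $u=xC(-x)$, $v=yC(-y)$, so that $x=u(1+u)$, $y=v(1+v)$ and $\sqrt{1+4x}=1+2u$, $\sqrt{1+4y}=1+2v$: under this change of variables one computes $(1+2x+2y)-\sqrt{(1+4x)(1+4y)}=2(u-v)^{2}$ and $x-y=(u-v)(1+u+v)$, so $F$ collapses to
\[F=\frac{uv(1+u)(1+v)}{(1+u+v)^{2}}\textrm{,}\]
and bivariate Lagrange inversion then extracts $[x^{p}y^{q}]F$ as an elementary binomial sum that reduces to the claimed product.

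Finally, to obtain (\ref{equation: two bridges}) I would subtract from (\ref{equation: partition face}) the contribution of pairs with $i=p$ or $j=q$. The pairs with $i=p$ parameterize $\{(k,l):k+l=p+q-2,\ l\leq q-1\}$ and the pairs with $j=q$ parameterize $\{(k,l):k+l=p+q-2,\ k\leq p-1\}$; together these cover every $(k,l)$ with $k+l=p+q-2$, and the pair $(i,j)=(p,q)$, corresponding to $(k,l)=(p-1,q-1)$, is in both. Inclusion–exclusion together with the Catalan convolution $\sum_{k+l=p+q-2}C_{k}C_{l}=C_{p+q-1}$ makes the total boundary contribution collapse to $(-1)^{p+q}C_{p+q-1}$, yielding the correction term $(-1)^{p+q-1}C_{p+q-1}$ in (\ref{equation: two bridges}). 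The main obstacle is the coefficient extraction in the previous paragraph: setting up the generating function and the boundary bookkeeping for the $(-1)^{p+q-1}C_{p+q-1}$ correction are routine, but reducing the closed form of $F$ to the single product of central binomial coefficients on the right-hand side is the step that requires genuine care.
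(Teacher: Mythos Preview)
Your approach is essentially the same as the paper's: both set up the bivariate generating function, obtain the closed form $\frac{xy(1+2x+2y-\sqrt{(1+4x)(1+4y)})}{2(x-y)^{2}}$, and extract coefficients by differentiating in a homogeneity variable $z$ and recognizing the product $\frac{2xy}{(1+4x)^{3/2}(1+4y)^{3/2}}$. The paper writes this generating function as $g_{1}^{2}-g_{1}$ (so it proves (\ref{equation: two bridges}) first and then adds the boundary terms to get (\ref{equation: partition face})), whereas you write it as $xy\,G(-x,-y)^{2}$ (so you prove (\ref{equation: partition face}) first and then subtract the boundary terms); since these are literally the same power series, the difference is cosmetic.

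One small slip: you write $\left.\partial_{z}F(xz,yz)\right|_{z=1}$, but a single $z$-derivative only produces the factor $p+q$; to manufacture the factor $(p+q)(p+q-1)$ on the right-hand side you need $\left.\partial_{z}^{2}F(xz,yz)\right|_{z=1}$, which is exactly what the paper computes and which yields $\frac{2xy}{(1+4x)^{3/2}(1+4y)^{3/2}}$. Your identity $\sum_{n\geq 1}n(n+1)C_{n}(-x)^{n}=-2x/(1+4x)^{3/2}$ is the right one for reading off the coefficients at that point, so the ingredients are all present. The Lagrange-inversion alternative you sketch is not in the paper and would also work, but is not needed.
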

\begin{proof}
The generating function for the contribution of a bridge with \(i\) elements on one side and \(j\) elements on the other, \(i,j>0\), is \(g_{1}\) from the proof of Proposition~\ref{proposition: self dual}:
\[g_{1}\left(x,y\right)=\frac{1}{2}+\frac{y\sqrt{1+4x}-x\sqrt{1+4y}}{2\left(x-y\right)}\textrm{.}\]
Thus the generating function for (\ref{equation: partition face}) is
\[\sum_{p,q=1}^{\infty}\sum_{i=1}^{p-1}\sum_{j=1}^{q-1}\left(-1\right)^{k+l}C_{i+j-1}C_{p-i+q-j-1}x^{p}y^{q}=g_{1}\left(x,y\right)^{2}\textrm{.}\]
Since
\begin{multline*}
g_{1}\left(x,y\right)^{2}-\sum_{k=2}^{\infty}\left(-1\right)^{k-1}C_{k-1}\left(x^{k-1}y+\cdots+xy^{k-1}\right)=g_{1}\left(x,y\right)^{2}-g_{1}\left(x,y\right)\\=xy\frac{1+2x+2y-\sqrt{1+4x}\sqrt{1+4y}}{2\left(x-y\right)^{2}}
\end{multline*}
and
\[\left.\frac{\partial^{2}}{\partial z^{2}}\left(g_{1}\left(xz,yz\right)^{2}-g_{1}\left(xz,yz\right)\right)\right|_{z=1}=\frac{2xy}{\left(1+4x\right)^{3/2}\left(1+4y\right)^{3/2}}\]
we have (\ref{equation: two bridges}) (see the end of Proposition~\ref{proposition: self dual} for more detail).

The expression (\ref{equation: partition face}) is equal to the expression (\ref{equation: two bridges}) plus the additional terms corresponding to \(i=p\) or \(j=q\):
\begin{multline*}
\left(-1\right)^{p+q}\left[\sum_{j=1}^{q-1}C_{p+q-j-1}C_{j-1}+\sum_{i=1}^{p-1}C_{i-1}C_{p+q-i-1}+C_{p-1}C_{q-1}\right]\\=\left(-1\right)^{p+q}\sum_{k=0}^{}C_{k}C_{p+q-2-k}=\left(-1\right)^{p+q}C_{p+q-1}
\end{multline*}
(by a standard recursion formula for Catalan numbers), whence (\ref{equation: partition face}).
\end{proof}

\begin{proposition}
Let \({\cal U},{\cal V}\in{\cal P}_{\mathrm{nc}}\left(p,q\right)\).

If there is exactly one permutation \(\rho\in\Pi^{-1}\left({\cal V}\right)\) (i.e.\ if \({\cal V}\) has no bridges or more than one bridge) and there is a \(\pi\in\Pi^{-1}\left({\cal U}\right)\) such that \(\pi\preceq\rho\), then
\begin{equation}
\mu_{{\cal P}_{\mathrm{nc}}}\left({\cal U},{\cal V}\right)=\mu\left(\pi,\rho\right)\textrm{.}
\label{equation: P easy}
\end{equation}

If there is exactly one permutation \(\rho\in\Pi^{-1}\left({\cal V}\right)\) and there is no \(\pi\in\Pi^{-1}\left({\cal U}\right)\) with \(\pi\preceq\rho\), then
\begin{equation}
\mu_{{\cal P}_{\mathrm{nc}}}\left({\cal U},{\cal V}\right)=0\textrm{.}
\label{equation: P easiest}
\end{equation}

If \({\cal V}\) has exactly one bridge \(V_{0}\) and \({\cal U}\) has no bridges, then letting \(\rho_{0}=\Pi^{-1}\left({\cal V}\right)\),
\begin{multline}
\mu_{{\cal P}_{\mathrm{nc}}}\left({\cal U},{\cal V}\right)\\=\sum_{\substack{U_{1},U_{2}\in\Pi\left(\mathrm{Kr}_{\rho_{0}}\left({\cal U}\right)\right)\\U_{1}\subseteq\left[p\right],U_{2}\in\left[p+1,p+q\right]}}\left(-1\right)^{\left|U_{1}\right|+\left|U_{2}\right|}\left(\gamma_{\left|U_{1}\right|,\left|U_{2}\right|}-\left|U_{1}\right|\left|U_{2}\right|C_{\left|U_{1}\right|+\left|U_{2}\right|-1}\right)\\\times\prod_{U\in\Pi\left(\mathrm{Kr}_{\rho_{0}}\left({\cal U}\right)\right)\setminus\left\{U_{1},U_{2}\right\}}\left(-1\right)^{\left|U\right|-1}C_{\left|U\right|-1}\\-\#\left(\left.\pi\right|_{V_{0}\cap\left[p\right]}\right)\#\left(\left.\pi\right|_{V_{0}\cap\left[p+1,p+q\right]}\right)\prod_{U\in\Pi\left(\mathrm{Kr}_{\rho_{0}}\left(\pi\right)\right)}\left(-1\right)^{\left|U\right|-1}C_{\left|U\right|-1}\textrm{.}
\label{equation: P 0-1}
\end{multline}

If both \({\cal U}\) and \({\cal V}\) both have exactly one bridge then, denoting the bridge of \({\cal U}\) by \(U_{0}\), and letting \(\pi_{0}:=\Pi^{-1}\left({\cal U}\wedge\tau\right)\) and \(\rho_{0}:=\Pi^{-1}\left({\cal V}\wedge\tau\right)\):
\begin{multline}
\mu_{{\cal P}_{\mathrm{nc}}}\left({\cal U},{\cal V}\right)\\=\sum_{\substack{U_{1},U_{2}\in\Pi\left(\mathrm{Kr}_{\rho_{0}}\left(\pi_{0}\right)\right)\\U_{1}\in\left[p\right],U_{2}\in\left[p+1,p+q\right]\\\pi_{0}\left(U_{k}\right)\cup U_{0}\neq\emptyset}}\left(-1\right)^{\left|U_{1}\right|+\left|U_{2}\right|}\left[\frac{2}{\left|U_{1}\right|+\left|U_{2}\right|-1}\gamma_{\left|U_{1}\right|,\left|U_{2}\right|}-C_{\left|U_{1}\right|+\left|U_{2}\right|-1}\right]\\\times\prod_{U\in\Pi\left(\mathrm{Kr}_{\rho_{0}}\left(\pi_{0}\right)\right)\setminus\left\{U_{1},U_{2}\right\}}\left(-1\right)^{\left|U\right|-1}C_{\left|U\right|-1}+\prod_{U\in\Pi\left(\mathrm{Kr}_{\rho_{0}}\left(\pi_{0}\right)\right)}\left(-1\right)^{\left|U\right|-1}C_{\left|U\right|-1}\textrm{.}
\label{equation: P 1-1}
\end{multline}

If \({\cal V}\) has exactly one bridge and \({\cal U}\) has more than one bridge, then, denoting \(\rho_{0}:={\cal V}\wedge\tau\):
\begin{multline}
\mu_{{\cal P}_{\mathrm{nc}}}\left({\cal U},{\cal V}\right)\\=\sum_{\textrm{\(U_{0}\) bridge of \(\mathrm{Kr}_{\rho_{0}}\left({\cal U}\right)\)}}\left(-1\right)^{\left|U_{0}\right|}\frac{2}{\left|U_{0}\right|-1}\gamma_{\left|U_{0}\cap\left[p\right]\right|,\left|U_{0}\cap\left[p+1,p+q\right]\right|}\\\times\prod_{U\in\Pi\left(\mathrm{Kr}_{\rho_{0}}\left({\cal U}\right)\right)\setminus\left\{U_{0}\right\}}\left(-1\right)^{\left|U\right|-1}C_{\left|U\right|-1}+\prod_{U\in\Pi\left(\mathrm{Kr}_{\rho_{0}}\left({\cal U}\right)\right)}\left(-1\right)^{\left|U\right|-1}C_{\left|U\right|-1}\textrm{.}
\label{equation: P n-1}
\end{multline}
\end{proposition}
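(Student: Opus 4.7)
My plan is to parallel the proof of Proposition~\ref{proposition: self dual} closely, using the Principle of Inclusion and Exclusion, while accounting for the fact that a partition with exactly one bridge is the common image under \(\Pi\) of several permutations in \(S_{\mathrm{nc}}(p,q)\). The key combinatorial ingredient that absorbs this identification will be the identity (\ref{equation: partition face}) proved in the preceding lemma.

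First I would handle the two easy cases (\ref{equation: P easy}) and (\ref{equation: P easiest}). When \(\mathcal{V}\) has a unique permutation preimage \(\rho\) (i.e.\ \(\mathcal{V}\) has zero or at least two bridges), my claim is that the rule \(\mathcal{W}\mapsto\) (the unique \(\sigma\in\Pi^{-1}(\mathcal{W})\) with \(\sigma\preceq\rho\)) is an order-preserving bijection \([\mathcal{U},\mathcal{V}]\cong[\pi,\rho]\) in the poset of Section~\ref{section: permutations}. For well-definedness: if \(\mathcal{V}\) has no bridges then no \(\mathcal{W}\preceq\mathcal{V}\) can have a bridge, so every \(\mathcal{W}\) already has a unique preimage; if \(\mathcal{V}\) has \(\ge 2\) bridges and \(\mathcal{W}\) has exactly one bridge \(B'\), then \(B'\) is contained in a bridge \(B\) of \(\rho\), and the fixed cyclic order of \(B\) in \(\rho\) picks out a unique compatible sub-ordering on \(B'\). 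Equation (\ref{equation: P easy}) then follows from the Möbius formula of Section~\ref{section: permutations}, and (\ref{equation: P easiest}) is immediate since the interval collapses at the top.

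For the three hard cases, where \(\mathcal{V}\) has exactly one bridge \(V_0\), I would compute \(\mu_{{\cal P}_{\mathrm{nc}}}(\mathcal{U},\mathcal{V})=-\sum_{\mathcal{W}\in(\mathcal{U},\mathcal{V}]}\mu_{{\cal P}_{\mathrm{nc}}}(\mathcal{W},\mathcal{V})\) along the lines of Proposition~\ref{proposition: self dual}. By the Principle of Inclusion and Exclusion, applied to the sublattice of disc-noncrossing partitions sitting strictly above \(\mathcal{U}\), the contributions of all \(\mathcal{W}\) dominating some \(\upsilon\succ\mathcal{U}\) cancel. What remains is (i) the element \(\mathcal{V}\) itself, whose Möbius value is obtained from the easy case applied to the pair \((\mathcal{U},\mathcal{V}\wedge\tau)\), yielding a pure Catalan product; and (ii) the intermediate partitions \(\mathcal{W}\) whose single bridge is contained in \(V_0\). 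By Lemma~\ref{lemma: technical} parts~\ref{item: selected cycles}--\ref{item: skeleton}, these are parametrized by a choice of two outside-face blocks \(U_1\subseteq[p]\), \(U_2\subseteq[p+1,p+q]\) from \(\Pi(\mathrm{Kr}_{\rho_0}(\pi_0))\), together with a noncrossing bridge permutation on \(\tau|_{U_1\cup U_2}\) whose partition collapses several permutations into one. The generating-function computation of Proposition~\ref{proposition: self dual} gives the sum over permutations, and then identity (\ref{equation: partition face}) converts this sum into the desired factor \(\frac{2}{|U_1|+|U_2|-1}\gamma_{|U_1|,|U_2|}\) (or \(\gamma_{|U_1|,|U_2|}-|U_1||U_2|C_{|U_1|+|U_2|-1}\) in (\ref{equation: P 0-1})).

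The main obstacle is the case-by-case bookkeeping that distinguishes (\ref{equation: P 0-1}), (\ref{equation: P 1-1}) and (\ref{equation: P n-1}), according to whether \(\mathcal{U}\) has zero, one, or more than one bridge. In (\ref{equation: P 0-1}) the unique preimage of \(\mathcal{U}\) is paired with one-bridge extensions, producing the \(\#(\pi|_{V_0\cap[p]})\#(\pi|_{V_0\cap[p+1,p+q]})\) correction by counting how many disc-refined partitions collapse to the same one-bridge partition; in (\ref{equation: P 1-1}) the preimage of \(\mathcal{U}\) is already one of \(r\cdot s\) choices, and the extra \(+\prod C_{|U|-1}\) term emerges from this identification; in (\ref{equation: P n-1}) the outside-face sum collapses to a sum over a single bridge \(U_0\in\Pi(\mathrm{Kr}_{\rho_0}(\mathcal{U}))\) because every bridge of \(\mathcal{U}\) must already be contained in \(V_0\). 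Verifying that the signs, the Catalan-number corrections, and the \(\gamma_{p,q}/(p+q-1)\)-vs.-\(\gamma_{p,q}\) distinction produced by (\ref{equation: partition face}) line up with the three stated formulas in each case is the most delicate part of the argument.
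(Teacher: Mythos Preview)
Your handling of the ``easy'' cases contains a genuine gap. You assert that when \(\mathcal{V}\) has \(\ge 2\) bridges the rule \(\mathcal{W}\mapsto(\text{the unique }\sigma\in\Pi^{-1}(\mathcal{W})\text{ with }\sigma\preceq\rho)\) is a well-defined order isomorphism \([\mathcal{U},\mathcal{V}]\cong[\pi,\rho]\). But Example~\ref{example: partitions} shows this fails: with \(\mathcal{U}=\{\{1,5\},\{2,6\},\{3\},\{4\}\}\) and \(\mathcal{V}=\{\{1,2,5,6\},\{3,4\}\}\) we have \(\mathcal{U}\preceq\mathcal{V}\) in \({\cal P}_{\mathrm{nc}}\), yet the unique \(\pi=\Pi^{-1}(\mathcal{U})=(1,5)(2,6)\) is \emph{crossing} on the unique \(\rho=\Pi^{-1}(\mathcal{V})\). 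More generally, whenever an intermediate \(\mathcal{W}\) has all its bridges inside a single bridge of \(\mathcal{V}\), the permutation \(\Pi^{-1}(\mathcal{W})\) need not lie below \(\rho\). Your map is not defined on such \(\mathcal{W}\), and your claim that ``(\ref{equation: P easiest}) is immediate since the interval collapses at the top'' is incorrect: the partition interval \([\mathcal{U},\mathcal{V}]\) is nonempty even when no compatible \(\pi\) exists. The paper spends five separate cases (Cases~1--5) on this, proving the vanishing (\ref{equation: P easiest}) by an induction on \(\#(\mathcal{U})-\#(\mathcal{W}_0)\) using the lattice structure of \(S_{\mathrm{disc-nc}}(\rho)\), and then bootstrapping (\ref{equation: P easy}) from it by further inductions.

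For the three ``hard'' cases your outline diverges from the paper's method and is not detailed enough to succeed. The paper does \emph{not} rerun the inclusion--exclusion of Proposition~\ref{proposition: self dual} directly in \({\cal P}_{\mathrm{nc}}\); instead it routes each case through the partitioned-permutation poset \({\cal PS}'\), proving by induction identities of the shape \(\mu_{{\cal P}_{\mathrm{nc}}}(\mathcal{U},\mathcal{V})=\sum_{\rho}\mu_{{\cal PS}'}(\mathcal{U},(\mathcal{V},\rho))\) (Cases~6 and 8) or a double sum over both \(\pi\) and \(\rho\) (Case~7), and only then evaluates using the already-known \(\mu_{{\cal PS}'}\) and the identity (\ref{equation: partition face}). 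Your description of ``what remains'' after inclusion--exclusion---namely \(\mathcal{V}\) plus single-bridge \(\mathcal{W}\)---is not what actually survives; partitions \(\mathcal{W}\) with several bridges all in \(V_0\) are not dominated by any disc \(\upsilon\succ\mathcal{U}\) either, and you have not explained how their contributions are controlled. The bookkeeping you flag as ``the most delicate part'' is precisely where the paper's detour through \({\cal PS}'\) pays off, and your sketch does not supply an alternative mechanism.
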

\begin{proof}
We divide the proof into eight cases.
\begin{case}Neither \({\cal U}\) or \({\cal V}\) have bridges.

In this case \(\left[{\cal U},{\cal V}\right]\) is isomorphic to \(\left[\Pi^{-1}\left({\cal U}\right),\Pi^{-1}\left({\cal V}\right)\right]\), so \(\mu_{{\cal P}_{\mathrm{nc}}}\left({\cal U},{\cal V}\right)\) is given by (\ref{equation: P easy}).
\label{case: 0-0}
\end{case} 
\begin{case}\({\cal U}\) has more than one bridge, contained in more than one distinct bridge of \({\cal V}\).

Let \({\cal W}_{1},{\cal W}_{2}\in\left[{\cal U},{\cal V}\right]\) with \({\cal W}_{1}\preceq{\cal W}_{2}\).  Then the bridges of \({\cal W}_{1}\) are contained in more than one bridge of \({\cal W}_{2}\), since bridges of \({\cal U}\) contained in distinct bridges of \({\cal V}\) must be contained in bridges of \({\cal W}_{1}\) which must be contained in bridges of \({\cal W}_{2}\), which must be distinct since they are in distinct bridges of \({\cal V}\).

We note that for any such \({\cal W}_{1}\) and \({\cal W}_{2}\), \(\Pi^{-1}\left({\cal W}_{1}\right)\preceq\Pi^{-1}\left({\cal W}_{2}\right)\), since any elements which in \(\Pi^{-1}\left({\cal W}_{1}\right)\) satisfy the disc-nonstandard or disc-crossing conditions on a cycle of \(\Pi^{-1}\left({\cal W}_{2}\right)\) must satisfy the disc-nonstandard or disc-crossing conditions on \(\lambda_{x,y}\) with \(x\) and \(y\) chosen from a bridge of \({\cal W}_{1}\) not contained in that cycle of \(\Pi^{-1}\left({\cal W}_{2}\right)\), and hence must satisfy one of the annular-noncrossing conditions~\ref{item: ac2} or \ref{item: ac3}.

Thus \(\left[{\cal U},{\cal V}\right]\) is isomorphic to \(\left[\Pi^{-1}\left({\cal U}\right),\Pi^{-1}\left({\cal V}\right)\right]\), so the M\"{o}bius function is given by (\ref{equation: P easy}).
\label{case: n-n}
\end{case}
\begin{case}all bridges of \({\cal U}\) (there is at least one) are contained in one bridge of \({\cal V}\), \({\cal V}\) has more than one bridge, and there is no \(\pi\in\Pi^{-1}\left({\cal U}\right)\) such that \(\pi\preceq\Pi^{-1}\left({\cal V}\right)\).

We consider the set of \(\sigma\in S_{\mathrm{nc}}\left(p,q\right)\) with \(\sigma\preceq\Pi^{-1}\left({\cal V}\right)\) and such that \(\Pi\left(\sigma\right)\succeq{\cal U}\).  This set is a subset of \(S_{\mathrm{disc-nc}}\left(\Pi^{-1}\left({\cal V}\right)\right)\), which is a lattice.  If \(\sigma_{1},\sigma_{2}\) are members of this set, then \(\sigma_{1}\wedge\sigma_{2}\) exists and must also be a member of the set, since \(\Pi\left(\sigma_{1}\wedge\sigma_{2}\right)=\Pi\left(\sigma_{1}\right)\wedge\Pi\left(\sigma_{2}\right)\succeq{\cal U}\).  This set then contains its greatest lower bound, i.e.\ a \(\sigma_{0}\) such that \(\sigma_{0}\preceq\sigma\) for all \(\sigma\) in the set (the meet of all of the finitely many elements of the set).  We let \({\cal W}_{0}=\Pi\left(\sigma_{0}\right)\).  Since \(\sigma_{0}\preceq\Pi^{-1}\left({\cal V}\right)\), \({\cal W}_{0}\neq{\cal U}\), and since \({\cal W}_{0}\succeq{\cal U}\), \(\left[{\cal W}_{0},{\cal V}\right]\subseteq\left[{\cal U},{\cal V}\right]\).  We also note that \({\cal W}_{0}\neq{\cal V}\), since replacing a bridge (not containing the bridges of \({\cal U}\)) of \(\Pi^{-1}\left({\cal V}\right)\) with the cycles it induces on each of \(\left[p\right]\) and \(\left[p+1,p+q\right]\) creates a smaller permutation \(\sigma\) with \(\Pi\left(\sigma\right)\succeq{\cal U}\).

We consider the expression:
\begin{multline}
\mu_{{\cal P}_{\mathrm{nc}}}\left({\cal U},{\cal V}\right)=-\sum_{{\cal W}\in\left({\cal U},{\cal V}\right]}\mu_{{\cal P}_{\mathrm{nc}}}\left({\cal W},{\cal V}\right)\\=-\sum_{{\cal W}\in\left[{\cal W}_{0},{\cal V}\right]}\mu_{{\cal P}_{\mathrm{nc}}}\left({\cal W},{\cal V}\right)-\sum_{{\cal W}\in\left({\cal U},{\cal V}\right]\setminus\left[{\cal W}_{0},{\cal V}\right]}\mu_{{\cal P}_{\mathrm{nc}}}\left({\cal W},{\cal V}\right)\\=0-\sum_{{\cal W}\in\left({\cal U},{\cal V}\right]\setminus\left[{\cal W}_{0},{\cal V}\right]}\mu_{{\cal P}_{\mathrm{nc}}}\left({\cal W},{\cal V}\right)\textrm{.}
\label{equation: extraneous partitions}
\end{multline}
We proceed by induction on \(\#\left({\cal U}\right)-\#\left({\cal W}_{0}\right)\).

For the base case, if this value is minimal, then there can be no elements other than \({\cal U}\) in \(\left[{\cal U},{\cal V}\right]\setminus\left[{\cal W}_{0},{\cal V}\right]\) (since if \({\cal W}\in\left[{\cal U},{\cal V}\right]\) with a \(\sigma\in\Pi^{-1}\left({\cal W}\right)\) with \(\sigma\preceq\Pi^{-1}\left({\cal V}\right)\), it will be in \(\left[{\cal W}_{0},{\cal V}\right]\), and if there is no such \(\sigma\), then \(\#\left({\cal W}\right)-\#\left({\cal W}_{0}\right)<\#\left({\cal U}\right)-\#\left({\cal W}_{0}\right)\)).  Thus there are no terms in the sum on the right-hand side of (\ref{equation: extraneous partitions}), and hence the M\"{o}bius function is given by (\ref{equation: P easiest}).

For the induction step, any term in the sum on the right-hand side of (\ref{equation: extraneous partitions}) is covered by the induction step and thus vanishes, and hence the M\"{o}bius function is agan given by (\ref{equation: P easiest}).
\label{case: 1-n 0}
\end{case}
\begin{case}all bridges of \({\cal U}\) (there is at least one) are contained in one bridge of \({\cal V}\), \({\cal V}\) has more than one bridge, and there is an element \(\pi\in\Pi^{-1}\left({\cal U}\right)\) such that \(\pi\preceq\Pi^{-1}\left({\cal V}\right)\).

We proceed by induction on \(\#\left({\cal V}\right)-\#\left({\cal U}\right)\), noting that in the base case \(\#\left({\cal V}\right)-\#\left({\cal U}\right)=1\), \({\cal V}\) covers \({\cal U}\), so \(\mu_{{\cal P}_{\mathrm{nc}}}\left({\cal U},{\cal V}\right)=-1\), consistent with (\ref{equation: P easy}).

We construct \(\sigma_{0}\) from \(\Pi^{-1}\left({\cal V}\right)\) by splitting each bridge except the one containing the bridges of \(\Pi^{-1}\left({\cal U}\right)\) into two cycles: the cycle induced on \(\left[p\right]\) and the cycle induced on \(\left[p+1,p+q\right]\).  (It is easily verified that \(\sigma_{0}\) is noncrossing on \(\Pi^{-1}\left({\cal V}\right)\).)

If \({\cal W}\in\left[{\cal U},{\cal V}\right]\) does not have more than one bridge, that bridge must contain the bridges of \({\cal U}\) and hence must be contained in the remaining bridge of \(\Pi\left(\sigma_{0}\right)\), and thus \({\cal W}\preceq\Pi\left(\sigma_{0}\right)\).  Thus any \({\cal W}\in\left[{\cal U},{\cal V}\right)\setminus\left[{\cal U},\Pi\left(\sigma_{0}\right)\right]\) must have more than one bridge, so any nonvanishing \(\mu_{{\cal P}_{\mathrm{nc}}}\left({\cal U},{\cal W}\right)\) is given by (\ref{equation: P easy}) by either Case~\ref{case: n-n}, Case~\ref{case: 1-n 0}, or the induction hypothesis.  Then
\begin{multline*}
\mu_{{\cal P}_{\mathrm{nc}}}\left({\cal U},{\cal V}\right)=-\sum_{{\cal W}\in\left[{\cal U},{\cal V}\right)}\mu_{{\cal P}_{\mathrm{nc}}}\left({\cal U},{\cal W}\right)\\=-\sum_{{\cal W}\in\left[{\cal U},\Pi\left(\sigma_{0}\right)\right]}\mu_{{\cal P}_{\mathrm{nc}}}\left({\cal U},{\cal W}\right)-\sum_{{\cal W}\in\left[{\cal U},{\cal V}\right)\setminus\left[{\cal U},\Pi\left(\sigma_{0}\right)\right]}\mu_{{\cal P}_{\mathrm{nc}}}\left({\cal U},{\cal W}\right)
\end{multline*}
The first sum on the right-hand side vanishes, and is hence equal to the first sum on the right-hand side of the equation below.  Any nonzero term in the second sum corresponds to a \({\cal W}\) with exactly one \(\sigma\in\Pi^{-1}\left({\cal W}\right)\) in \(\left[\pi,\Pi^{-1}\left({\cal V}\right)\right)\setminus\left[\pi,\sigma_{0}\right]\) (as in Case~\ref{case: n-n}, \(\sigma\preceq\Pi^{-1}\left({\cal V}\right)\)).  Thus
\[\mu_{{\cal P}_{\mathrm{nc}}}\left({\cal U},{\cal V}\right)=-\sum_{\sigma\in\left[\pi,\sigma_{0}\right]}\mu\left(\pi,\sigma\right)-\sum_{\sigma\in\left[\pi,\Pi^{-1}\left({\cal V}\right)\right)\setminus\left[\pi,\sigma_{0}\right]}\mu\left(\pi,\sigma\right)=\mu\left(\pi,\Pi^{-1}\left({\cal V}\right)\right)\]
so the M\"{o}bius function is given by (\ref{equation: P easy}).
\label{case: 1-n}
\end{case}
\begin{case}\({\cal U}\) has no bridges; \({\cal V}\) has more than one bridge.

We proceed by induction on \(\#\left({\cal U}\right)-\#\left({\cal V}\right)\).  The smallest possible value in this case is \(2\) (when \({\cal V}\) has two bridges and \({\cal U}={\cal V}\wedge\tau\)), and by inspection there are two intermediate terms (with one bridge each), which cover \({\cal U}\) and are covered by \({\cal V}\).  Thus in this case (\ref{equation: P easy}) holds.

We construct \(\sigma_{1}\) by splitting all bridges of \(\Pi^{-1}\left({\cal V}\right)\) except one into the cycles it induces on \(\left[p\right]\) and \(\left[p+1,p+q\right]\).  We define \(\sigma_{2}\) similarly, choosing a different bridge of \(\Pi^{-1}\left({\cal V}\right)\).  We note that \(\Pi\left(\sigma_{1}\right)\wedge\Pi\left(\sigma_{2}\right)={\cal V}\wedge\tau\) and \(\sigma_{1}\wedge\sigma_{2}=\left.\Pi^{-1}\left({\cal V}\right)\right|_{\tau}\).  Thus (using the Principle of Inclusion and Exclusion):
\begin{multline*}
\mu_{{\cal P}_{\mathrm{nc}}}\left({\cal U},{\cal V}\right)=-\sum_{{\cal W}\in\left[{\cal U},{\cal V}\right)}\mu_{{\cal P}_{\mathrm{nc}}}\left({\cal U},{\cal W}\right)\\=-\sum_{{\cal W}\in\left[{\cal U},\Pi\left(\sigma_{1}\right)\right]}\mu_{{\cal P}_{\mathrm{nc}}}\left({\cal U},{\cal W}\right)-\sum_{{\cal W}\in\left[{\cal U},\Pi\left(\sigma_{2}\right)\right]}\mu_{{\cal P}_{\mathrm{nc}}}\left({\cal U},{\cal W}\right)+\sum_{{\cal W}\in\left[{\cal U},{\cal V}\wedge\tau\right]}\mu_{{\cal P}_{\mathrm{nc}}}\left({\cal U},{\cal W}\right)\\-\sum_{{\cal W}\in\left[{\cal U},{\cal V}\right)\setminus\left(\left[{\cal U},\Pi\left(\sigma_{1}\right)\right]\cup\left[{\cal U},\Pi\left(\sigma_{2}\right)\right]\right)}\mu_{{\cal P}_{\mathrm{nc}}}\left({\cal U},{\cal W}\right)\textrm{.}
\end{multline*}
The first two sums on the right-hand side vanish, and are hence equal to the first two sums on the first right-hand side of the following expression.  The terms in the third sum are given by Case~\ref{case: 0-0} (or the value of the sum by the properties of the M\"{o}bius function), so it is equal to the third sum on the right-hand side of the following expression.  In the fourth sum, all the \({\cal W}\) have at least two bridges, so the induction hypothesis applies to the M\"{o}bius function, and correspond to \(\sigma\in S_{\mathrm{nc}}\left(p,q\right)\) in the appropriate set (by the reasoning in Case~\ref{case: n-n}).  Thus:
\begin{multline*}
\mu_{{\cal P}_{\mathrm{nc}}}\left({\cal U},{\cal V}\right)=-\sum_{\sigma\in\left[{\cal U},\sigma_{1}\right]}\mu\left({\cal U},\sigma\right)-\sum_{\sigma\in\left[{\cal U},\sigma_{2}\right]}\mu\left({\cal U},\sigma\right)+\sum_{\sigma\in\left[{\cal U},\sigma_{1}\wedge\sigma_{2}\right]}\mu\left({\cal U},\sigma\right)\\-\sum_{\sigma\in\left[{\cal U},{\cal V}\right)\setminus\left(\left[{\cal U},\sigma_{1}\right]\cup\left[{\cal U},\sigma_{2}\right]\right)}\mu\left({\cal U},\sigma\right)=\sum_{\sigma\in\left[{\cal U},{\cal V}\right)}\mu\left({\cal U},\sigma\right)=\mu\left({\cal V},{\cal U}\right)
\end{multline*}
so the M\"{o}bius function is given by (\ref{equation: P easy}).
\end{case}
\begin{case}\({\cal U}\) has no bridges and \({\cal V}\) has one bridge.

We first show that
\begin{equation}
\mu_{{\cal P}_{\mathrm{nc}}}\left({\cal U},{\cal V}\right)=\sum_{\rho:\left({\cal V},\rho\right)\in{\cal PS}^{\prime}\left(p,q\right)}\mu_{{\cal PS}^{\prime}}\left({\cal U},\left({\cal V},\rho\right)\right)\textrm{.}
\label{equation: 0-1}
\end{equation}
Each term in the sum is defined, since \({\cal U}\preceq{\cal V}\wedge\tau\).  We proceed by induction on \(\#\left({\cal U}\right)-\#\left({\cal V}\right)\).

In the base case, \({\cal V}\) covers \({\cal U}\), and
\begin{multline*}
\sum_{\rho:\left({\cal V},\rho\right)\in{\cal PS}^{\prime}\left(p,q\right)}\mu_{{\cal PS}^{\prime}}\left({\cal U},\left({\cal V},\rho\right)\right)\\=\sum_{\left({\cal W},\sigma\right)\in\left[{\cal U},{\cal V}\right]}\mu_{{\cal PS}^{\prime}}\left({\cal U},\left({\cal W},\sigma\right)\right)-\mu_{{\cal PS}^{\prime}}\left({\cal U},{\cal U}\right)=0-1\textrm{.}
\end{multline*}

For the induction step,
\begin{multline*}
\mu_{{\cal P}_{\mathrm{nc}}}\left({\cal U},{\cal V}\right)=-\sum_{{\cal W}\in\left[{\cal U},{\cal V}\right)}\mu_{{\cal P}_{\mathrm{nc}}}\left({\cal U},{\cal W}\right)\\=-\sum_{\substack{{\cal W}\in\left[{\cal U},{\cal V}\right)\\\textrm{\({\cal W}\) one bridge}}}\mu_{{\cal P}_{\mathrm{nc}}}\left({\cal U},{\cal W}\right)-\sum_{\substack{{\cal W}\in\left[{\cal U},{\cal V}\right)\\\textrm{\({\cal W}\) not one bridge}}}\mu_{{\cal P}_{\mathrm{nc}}}\left({\cal U},{\cal W}\right)\\=-\sum_{\substack{\left({\cal W},\sigma\right)\in{\cal PS}^{\prime}\left(p,q\right)\\{\cal W}\in\left[{\cal U},{\cal V}\right)\\\textrm{\({\cal W}\) one bridge}}}\mu_{{\cal PS}^{\prime}}\left({\cal U},\left({\cal W},\sigma\right)\right)-\sum_{\substack{{\cal W}\in\left[{\cal U},{\cal V}\right)\\\textrm{\({\cal W}\) not one bridge}}}\mu_{{\cal PS}^{\prime}}\left({\cal U},{\cal W}\right)
\end{multline*}
The terms in the two sums are the \(\left({\cal W},\sigma\right)\in\left[{\cal U},{\cal V}\right]\) (\({\cal U}\) is noncrossing on any of the \(\sigma\) since \({\cal U}\preceq\sigma\wedge\tau\preceq\sigma\); and such a \(\sigma\) is noncrossing on \(\Pi^{-1}\left({\cal V}\wedge\tau\right)\) by Lemma~\ref{lemma: technical}, part~\ref{item: piecewise}).  All \(\left({\cal W},\sigma\right)\in\left[{\cal U},{\cal V}\right]\) appear in one of the two sums except those for which \({\cal W}={\cal V}\), i.e.\ those which appear in (\ref{equation: 0-1}).

The poset \(\left[{\cal U},{\cal V}\right]\) is the product of the posets restricted to each block of \({\cal V}\) (Lemma~\ref{lemma: technical}, part~\ref{item: piecewise}).  The contribution of each block of \({\cal V}\) that is not the bridge is given by (\ref{equation: disc noncrossing}), so we may restrict our attention to the case \({\cal V}=1\).

The \(\rho\) with \(\left(1,\rho\right)\) either have \(\Pi\left(\rho\right)=1\) or \(\rho=\tau\).  We consider these two cases.

The \(\rho\in\Pi^{-1}\left(1\right)\) are \(\left(a,\tau\left(a\right),\ldots,\tau^{p-1}\left(a\right),b,\tau\left(b\right),\ldots,\tau^{q-1}\left(b\right)\right)\) for some \(a\in\left[p\right]\) and \(b\in\left[p+1,p+q\right]\) (all other possibilities are ruled out by the annular nonstandard conditions~\ref{item: ans1} and \ref{item: ans2}), so \(\mathrm{Kr}^{-1}\left(\rho\right)=\left(a,b\right)\).  Letting \(\pi:=\mathrm{Kr}^{-1}\left({\cal U}\right)\), if \(a\in U_{1}\in\Pi\left(\pi\right)\) and \(b\in U_{2}\in\Pi\left(\pi\right)\)), by calculation the permutation \(\mathrm{Kr}_{\pi}\left(\mathrm{Kr}^{-1}\left(\rho\right)\right)\) contains the cycle
\[\left(a,\pi\left(a\right),\ldots,\pi^{-1}\left(a\right),b,\pi\left(b\right),\ldots,\pi^{-1}\left(b\right)\right)\]
while all other cycles are the cycles of \(\pi\).  Thus
\begin{multline*}
\mu\left(\Pi^{-1}\left({\cal U}\right),\rho\right)=\mu\left(\mathrm{Kr}^{-1}\left(\rho\right),\mathrm{Kr}^{-1}\left({\cal U}\right)\right)\\=\left(-1\right)^{\left|U_{1}\right|+\left|U_{2}\right|-1}C_{\left|U_{1}\right|+\left|U_{2}\right|-1}\prod_{U\in\Pi\left(\mathrm{Kr}^{-1}\left({\cal U}\right)\right)\setminus\left\{U_{1},U_{2}\right\}}\left(-1\right)^{\left|U\right|-1}C_{\left|U\right|-1}
\end{multline*}
so the sum over all such \(\rho\) is
\begin{multline*}
\sum_{\substack{U_{1},U_{2}\in\Pi\left(\mathrm{Kr}\left({\cal U}\right)\right)\\U_{1}\subseteq\left[p\right],U_{2}\in\left[p+1,p+q\right]}}\left(-1\right)^{\left|U_{1}\right|+\left|U_{2}\right|-1}\left|U_{1}\right|\left|U_{2}\right|C_{\left|U_{1}\right|+\left|U_{2}\right|-1}\\\times\prod_{U\in\Pi\left(\mathrm{Kr}^{-1}\left({\cal U}\right)\right)\setminus\left\{U_{1},U_{2}\right\}}\left(-1\right)^{\left|U\right|-1}C_{\left|U\right|-1}\textrm{.}
\end{multline*}

In the case where \(\rho=\tau\), the value of \(\mu_{{\cal PS}^{\prime}}\left({\cal U},{\cal V}\right)\) is given by (\ref{equation: PS hard}).  The value (\ref{equation: P 0-1}) follows.
\label{case: 0-1}
\end{case}

\begin{case}\({\cal U}\) and \({\cal V}\) each have exactly one bridge.

We will use the notation around (\ref{equation: P 1-1}).  We show first that
\begin{equation}
\mu_{{\cal P}_{\mathrm{nc}}}\left({\cal U},{\cal V}\right)=\sum_{\pi:\left({\cal U},\pi\right)\in{\cal PS}^{\prime}\left(p,q\right)}\sum_{\substack{\rho:\left({\cal V},\rho\right)\in{\cal PS}^{\prime}\left(p,q\right)\\\left({\cal V},\rho\right)\succeq\left({\cal U},\pi\right)}}\mu_{{\cal PS}^{\prime}}\left(\left({\cal U},\pi\right),\left({\cal V},\rho\right)\right)
\label{equation: 1-1}
\end{equation}
by induction on \(\#\left({\cal U}\right)-\#\left({\cal V}\right)\).

For the base case, \({\cal U}={\cal V}\).  Let the bridge of \({\cal U}\) have \(r\) elements on one side and \(s\) elements on the other.  The \(\left({\cal U},\pi\right)\) are those where \(\Pi\left(\pi\right)={\cal U}\) and \(\left({\cal U},\pi_{0}\right)\), i.e.\ the elements of \(\left(\pi_{0},{\cal U}\right]\subseteq{\cal PS}^{\prime}\left(p,q\right)\).  Thus the contribution to (\ref{equation: 1-1}) by terms where \(\left({\cal V},\rho\right)=\left({\cal U},\pi_{0}\right)\) is
\[\sum_{\left({\cal W},\sigma\right)\in\left(\pi_{0},{\cal U}\right]}\mu_{{\cal PS}^{\prime}}\left(\left({\cal W},\sigma\right),\left({\cal V},\rho\right)\right)=-\mu_{{\cal PS}^{\prime}}\left(\pi_{0},{\cal U}\right)=-rs+1\]
(by (\ref{equation: PS hard})).
The contribution of the remainder of the terms to \(\mu_{{\cal PS}^{\prime}}\left({\cal U},{\cal V}\right)\) are those where \(\left({\cal V},\rho\right)\) has \(\Pi\left(\rho\right)={\cal V}\); there are \(rs\) choices of such a \(\rho\).  For a given \(\left({\cal V},\rho\right)\), the only \(\left({\cal U},\pi\right)\) for which \(\left({\cal V},\rho\right)\succeq\left({\cal U},\pi\right)\) is when \(\pi=\rho\), with contribution \(\mu_{{\cal PS}^{\prime}}\left(\pi,\rho\right)=1\).  Thus the total contribution of such terms is \(rs\).  The total is then  \(1\), as desired.

For the induction step:
\begin{multline*}
\mu_{{\cal P}_{\mathrm{nc}}}\left({\cal U},{\cal V}\right)=-\sum_{{\cal W}\in\left[{\cal U},{\cal V}\right)}\mu_{{\cal P}_{\mathrm{nc}}}\left({\cal U},{\cal W}\right)\\=-\sum_{\substack{{\cal W}\in\left[{\cal U},{\cal V}\right)\\\textrm{\({\cal W}\) one bridge}}}\mu_{{\cal P}_{\mathrm{nc}}}\left({\cal U},{\cal W}\right)-\sum_{\substack{{\cal W}\in\left[{\cal U},{\cal V}\right)\\\textrm{\({\cal W}\) more than one bridge}}}\mu_{{\cal P}_{\mathrm{nc}}}\left({\cal U},{\cal W}\right)
\end{multline*}
By the induction hypothesis, the first sum is equal to
\[\sum_{\pi:\left({\cal U},\pi\right)\in{\cal PS}^{\prime}\left(p,q\right)}\sum_{\substack{\left({\cal W},\sigma\right)\succeq\left({\cal U},\pi\right)\\{\cal W}\in\left[{\cal U},{\cal V}\right)\\\textrm{\({\cal W}\) one bridge}}}\mu_{{\cal PS}^{\prime}}\left(\left({\cal U},\pi\right),\left({\cal W},\sigma\right)\right)\]
while the second sum is
\[\sum_{\pi:\left({\cal U},\pi\right)\in{\cal PS}^{\prime}\left(p,q\right)}\sum_{\substack{\sigma\in\left[\pi,{\cal V}\right)\\\textrm{\(\sigma\) more than one bridge}}}\mu\left(\pi,\sigma\right)\]
(since any \({\cal W}\) is greater than at most one preimage in \(\Pi^{-1}\left(\pi\right)\), where its contribution is given by Case~\ref{case: 1-n}, and if it is not greater than any preimage its contribution vanishes by Case~\ref{case: 1-n 0}; and no such \(\sigma\) is greater than \(\left({\cal U},\pi_{0}\right)\), so the outer summand \(\pi=\pi_{0}\) may be ignored).  The terms in the two sums may be partitioned by the element \(\left({\cal U},\pi\right)\in{\cal PS}^{\prime}\left(p,q\right)\) by which they are indexed in the outer sum.  For each such \(\left({\cal U},\pi\right)\), we find in the two sums all terms \(\left({\cal W},\sigma\right)\in\left[\left({\cal U},\pi\right),\left({\cal V},\rho_{0}\right)\right]\) except those where \({\cal W}={\cal V}\), i.e.\ those appearing in (\ref{equation: 1-1}).  Since
\[\sum_{\left({\cal W},\sigma\right)\in\left[\left({\cal U},\pi\right),\left({\cal V},\rho_{0}\right)\right]}\mu_{{\cal PS}^{\prime}}\left(\left({\cal U},\pi\right),\left({\cal W},\sigma\right)\right)=0\]
(since \({\cal U}\neq{\cal V}\) and hence \(\left({\cal U},\pi\right)\neq\left({\cal V},\rho_{0}\right)\)) the expression (\ref{equation: 1-1}) follows.

The poset \(\left[{\cal U},{\cal V}\right]\) is the product of the posets on the blocks of \({\cal V}\), and the contribution of every block except the bridge is given by (\ref{equation: disc noncrossing}), so we may restrict our attention to the case \({\cal V}=1\).

The \(\pi\) with \(\left({\cal U},\pi\right)\in{\cal PS}^{\prime}\left(p,q\right)\) for a fixed \({\cal U}\) are the \(\pi\in\Pi^{-1}\left({\cal U}\right)\) and \(\pi=\pi_{0}\).

In the former case, the \(\pi\in\Pi^{-1}\left({\cal U}\right)\) may be constructed by choosing an element from each end of the bridge as \(a_{1}\) and \(b_{1}\) in the notation of Lemma~\ref{lemma: technical}, part~\ref{item: bridge}.  Then the unique bridge of \(\mathrm{Kr}^{-1}\left(\pi\right)\) contains \(a_{1}\) and \(b_{1}\), and by Lemma~\ref{lemma: technical} part~\ref{item: outside faces}, it consists of the two orbits of \(\mathrm{Kr}^{-1}\left(\pi_{0}\right)\) containing these elements, while all other orbits of \(\mathrm{Kr}^{-1}\left(\pi\right)\) are equal to the orbits of \(\mathrm{Kr}^{-1}\left(\pi_{0}\right)\).  Thus choosing a \(\pi\in\Pi^{-1}\left({\cal U}\right)\) is equivalent to choosing a block of \(\Pi\left(\mathrm{Kr}^{-1}\left(\pi_{0}\right)\right)\) from each cycle of \(\tau\), each sharing at least one element with \(U_{0}\).  Since \(\mathrm{Kr}\left(\pi_{0}\right)=\tau^{-1}\mathrm{Kr}^{-1}\left(\pi_{0}\right)\tau\), this is finally equivalent to choosing a block of \(\Pi\left(\mathrm{Kr}\left(\pi_{0}\right)\right)\) from each cycle of \(\tau\), each containing at least one element of \(\tau^{-1}\left(U_{0}\right)\), whence the first summation in (\ref{equation: P 1-1}).  (We finally note that when \({\cal V}\neq 1\), any block of \(\mathrm{Kr}_{\rho_{0}}\left(\pi_{0}\right)\) intersecting \(\rho_{0}^{-1}\left(U_{0}\right)\) will be contained in the an orbit of \(\rho_{0}\) containing elements of \(U_{0}\), and hence in the bridge of \({\cal V}\), so the condition that the \(U_{1},U_{2}\) in the summation be in the bridge of \({\cal V}\) is redundant.)

For a fixed \(\pi\), the \(\left(1,\rho\right)\succeq\pi\) with \(\Pi\left(\rho\right)=1\) are those where \(\mathrm{Kr}^{-1}\left(\rho\right)=\left(a,b\right)\) where \(a\) and \(b\) are contained in the unique bridge of \(\mathrm{Kr}^{-1}\left(\pi\right)\) (see Case~\ref{case: 0-1}).  We let the bridge of \(\mathrm{Kr}^{-1}\left(\pi\right)\) be \(\bar{U}_{0}=\left(c_{1},\ldots,c_{r},d_{1},\ldots,d_{s}\right)\) where \(c_{1},\ldots,c_{r}\in\left[p\right]\) and \(d_{1},\ldots,d_{s}\in\left[p+1,p+q\right]\) (so \(a=c_{i}\) and \(b=d_{j}\) for some \(i\in\left[r\right]\) and \(j\in\left[s\right]\)).  We compute that the cycles of \(\mathrm{Kr}_{\mathrm{Kr}^{-1}\left(\pi\right)}\left(\left(a,b\right)\right)\) contained in this bridge are
\[\left(c_{1},\ldots,c_{i-1},d_{j},\ldots,d_{s}\right)\left(c_{i},\ldots,c_{r},d_{1},\ldots,d_{j-1}\right)\]
and the remaining cycles of \(\mathrm{Kr}_{\mathrm{Kr}^{-1}\left(\pi\right)}\left(\left(a,b\right)\right)\) are those of \(\mathrm{Kr}^{-1}\left(\pi\right)\).  Thus the contribution of the \(\mu_{{\cal PS}^{\prime}}\left(\pi,\mathrm{Kr}\left(\left(a,b\right)\right)\right)\) for all possible \(a\) and \(b\) is
\[\left(-1\right)^{r+s}\sum_{i=1}^{r}\sum_{j=1}^{s}C_{s+i-j-1}C_{r-i+j-1}\prod_{U\in\Pi\left(\mathrm{Kr}\left(\pi\right)\right)\setminus\left\{\bar{U}_{0}\right\}}\left(-1\right)^{\left|U\right|-1}C_{\left|U\right|-1}\textrm{.}\]
The value of the double summation is given by (\ref{equation: partition face}).

For each fixed \(\pi\), we also have \(\left(1,\tau\right)\succeq\pi\), whose contribution is given by (\ref{equation: PS easy}).  These two contributions give the first summation on the right-hand side in (\ref{equation: P 1-1}).

In the latter case where \(\left({\cal U},\pi\right)=\left({\cal U},\pi_{0}\right)\), the only \(\left(1,\rho\right)\in{\cal PS}^{\prime}\left(p,q\right)\) with \(\left(1,\rho\right)\succeq\left({\cal U},\pi_{0}\right)\) is \(\left(1,\tau\right)\).  The contribution is given by (\ref{equation: PS easy}), whence the final product on the right-hand side of (\ref{equation: P 1-1}).
\label{case: 1-1}
\end{case}
\begin{case}\({\cal U}\) has more than one bridge and \({\cal V}\) has exactly one bridge.

We first show by induction on \(\#\left({\cal U}\right)-\#\left({\cal V}\right)\) that
\begin{equation}
\mu_{{\cal P}_{\mathrm{nc}}}\left({\cal U},{\cal V}\right)=\sum_{\substack{\rho:\left({\cal V},\rho\right)\in{\cal PS}^{\prime}\left(p,q\right)\\\Pi^{-1}\left({\cal U}\right)\preceq\left({\cal V},\rho\right)}}\mu_{{\cal PS}^{\prime}}\left({\cal U},{\cal V}\right)\textrm{.}
\label{equation: n-1}
\end{equation}

In the base case, \({\cal U}\) has two bridges, and \({\cal V}\) is constructed from \({\cal U}\) by replacing them with their union.  In this case, \({\cal V}\) covers \({\cal U}\), so \(\mu_{{\cal P}_{\mathrm{nc}}}\left({\cal U},{\cal V}\right)=-1\).  There are exactly two ways of constructing a \(\rho\in\Pi^{-1}\left({\cal V}\right)\) with \(\Pi^{-1}\left({\cal U}\right)\preceq\rho\) (i.e.\ so that \(\rho\) does not satisfy annular-nonstandard condition~\ref{item: ans2} and the bridges of \(\Pi^{-1}\left({\cal U}\right)\) are noncrossing on the resulting cycle), which are the only elements in the interval \(\left({\cal U},{\cal V}\right)\subseteq{\cal PS}^{\prime}\left(p,q\right)\).  Thus \(\mu_{{\cal PS}^{\prime}}\left({\cal U},{\cal V}\right)=-1\), consistent with (\ref{equation: n-1}).

For the induction step,
\begin{multline*}
\mu_{{\cal P}_{\mathrm{nc}}}\left({\cal U},{\cal V}\right)=-\sum_{{\cal W}\in\left[{\cal U},{\cal V}\right)}\mu_{{\cal P}_{\mathrm{nc}}}\left({\cal U},{\cal W}\right)\\=-\sum_{\substack{{\cal W}\in\left[{\cal U},{\cal V}\right)\\\textrm{\({\cal W}\) one bridge}}}\mu_{{\cal P}_{\mathrm{nc}}}\left({\cal U},{\cal W}\right)-\sum_{\substack{{\cal W}\in\left[{\cal U},{\cal V}\right)\\\textrm{\({\cal W}\) more than one bridge}}}\mu_{{\cal P}_{\mathrm{nc}}}\left({\cal U},{\cal W}\right)\\=-\sum_{\substack{\left({\cal W},\sigma\right)\succeq{\cal U}\\{\cal W}\in\left[{\cal U},{\cal V}\right)\\\textrm{\({\cal W}\) one bridge}}}\mu_{{\cal PS}^{\prime}}\left({\cal U},\left({\cal W},\sigma\right)\right)-\sum_{\substack{\left({\cal W},\sigma\right)\succeq{\cal U}\\{\cal W}\in\left[{\cal U},{\cal V}\right)\\\textrm{\({\cal W}\) more than one bridge}}}\mu_{{\cal PS}^{\prime}}\left({\cal U},{\cal W}\right)
\end{multline*}
(the second sum on the right-hand side since \(\mu_{{\cal P}_{\mathrm{nc}}}\left({\cal U},{\cal W}\right)\) vanishes when \(\Pi^{-1}\left({\cal U}\right)\npreceq{\cal W}\)).  The terms in the two sums correspond to the \(\left({\cal W},\sigma\right)\in\left[{\cal U},{\cal V}\right]\) (the \(\sigma\) are noncrossing on \(\Pi^{-1}\left({\cal V}\wedge\tau\right)\) by Lemma~\ref{lemma: technical}, part~\ref{item: piecewise}) except those with \({\cal W}={\cal V}\), i.e.\ those in (\ref{equation: n-1}).

Since \(\mu_{{\cal P}_{\mathrm{nc}}}\left({\cal U},{\cal V}\right)\) is the product over the blocks of \({\cal V}\) of the M\"{o}bius function of the restriction to that block, we may restrict our attention to the case where \({\cal V}=1\).  As in Case~\ref{case: 0-1}, the \(\rho\in\Pi^{-1}\left(1\right)\) are those with \(\mathrm{Kr}^{-1}\left(\rho\right)=\left(a,b\right)\).  The \(\rho\succeq{\cal U}\) are those where \(\mathrm{Kr}^{-1}\left(\rho\right)\preceq\mathrm{Kr}^{-1}\left({\cal U}\right)\), i.e.\ where \(a\) and \(b\) are contained in the same bridge of \(\mathrm{Kr}^{-1}\left({\cal U}\right)\).

As in Case~\ref{case: 1-1}, we let the bridge of \(\mathrm{Kr}^{-1}\left({\cal U}\right)\) containing \(a\) and \(b\) be \(U_{0}=\left(c_{1},\ldots,c_{r},d_{1},\ldots,d_{s}\right)\) where \(c_{1},\ldots,c_{r}\in\left[p\right]\) and \(d_{1},\ldots,d_{s}\in\left[p+1,p+q\right]\) and \(a=c_{i}\) and \(b=d_{j}\) for some \(i,j\).  Again, the cycles of \(\mathrm{Kr}_{\mathrm{Kr}^{-1}\left({\cal U}\right)}\left(\left(a,b\right)\right)\) contained in this bridge are
\[\left(c_{1},\ldots,c_{i-1},d_{j},\ldots,d_{s}\right)\left(c_{i},\ldots,c_{r},d_{1},\ldots,d_{j-1}\right)\]
and the remaining cycles of \(\mathrm{Kr}_{\mathrm{Kr}^{-1}\left({\cal U}\right)}\left(\left(a,b\right)\right)\) are those of \(\mathrm{Kr}^{-1}\left({\cal U}\right)\).  The contribution of the \(\rho\) with \(a\) and \(b\) in this particular bridge is
\[\left(-1\right)^{r+s}\sum_{i=1}^{r}\sum_{j=1}^{s}C_{s+i-j-1}C_{r-i+j-1}\prod_{U\in\Pi\left(\mathrm{Kr}\left({\cal U}\right)\right)\setminus\left\{U_{0}\right\}}\left(-1\right)^{\left|U\right|-1}C_{\left|U\right|-1}\]
where the formula for the double sum is given by (\ref{equation: partition face}).  The sum of the contributions of the \(\mu_{{\cal PS}^{\prime}}\left({\cal U},\rho\right)\) is given by the first term on the right-hand side of (\ref{equation: P n-1}).  (For general \({\cal V}\) and \(\rho\in\Pi^{-1}\left({\cal V}\right)\), any bridge \(\mathrm{Kr}^{-1}_{\rho}\left({\cal U}\right)\) is in the bridge of \({\cal V}\), so the restriction that \(U_{0}\) must be in the bridge of \({\cal V}\) is redundant.)

The \(\left(1,\rho\right)\in{\cal PS}^{\prime}\left(p,q\right)\) include a single element with a nontrivial block, \(\left(1,\tau\right)\).  The contribution of \(\mu_{{\cal PS}^{\prime}}\left({\cal U},1\right)\) is the second term on the right-hand side of (\ref{equation: P n-1}).
\label{case: n-1}
\end{case}
\end{proof}

We give here values of (\ref{equation: two bridges}) and (\ref{equation: partition face}) for small values of \(p\) and \(q\):

\begin{table}[h]
\centering
\begin{tabular}{r|ccccc}
&2&3&4&5&6\\
\hline
2&1&-4&14&-48&165\\
3&-4&18&-68&246&-880\\
4&14&-68&271&-1020&3762\\
5&-48&246&-1020&3958&-14956\\
6&165&-880&3762&-14956&57638\\
\end{tabular}
\caption{Small values of the expression (\ref{equation: two bridges}), representing the sum of possible contributions of two bridges with a total of \(r\) elements on one end and \(s\) elements on the other.}
\end{table}

\begin{table}[h]
\centering
\begin{tabular}{r|ccccc}
&1&2&3&4&5\\
\hline
1&1&-2&5&-14&42\\
2&-2&6&-18&56&-180\\
3&5&-18&60&-200&675\\
4&-14&56&-200&700&-2450\\
5&42&-180&675&-2450&8820\\
\end{tabular}
\caption{Small values of the expression (\ref{equation: partition face}), representing the contribution to \(\mu_{{\cal P}_{\mathrm{nc}}}\) of certain bridges with \(r\) elements on one end and \(s\) elements on the other.}
\end{table}

\section{Acknowledgments}

The author would like to thank Feodor Kogan for several useful discussions leading to this paper.

\bibliography{paper}
\bibliographystyle{plain}

\end{document}